\newtheorem{thm}{Theorem}[section]
\newtheorem{prop}[thm]{Proposition}
\newtheorem{rem}[thm]{Remark}
\numberwithin{equation}{section}
\newtheorem{theo}{Theorem}[subsection]
\newtheorem{coro}[theo]{Corollary}
\newtheorem{lema}[theo]{Lemma}
\newtheorem{propo}[theo]{Proposition}
\newtheorem{defnt}[theo]{Definition}
\newtheorem{remk}[theo]{Remark}
\begin{document}

\title {On the classification of
unstable $H^{\ast}V-A$-modules}

\author{Dorra BOURGUIBA
\footnote{\fontfamily{cm}\fontsize{8}{8pt}
\selectfont \noindent
pris en charge par l'unit\'e de recherche 00/UR/15-05.
Facult\'e de Sciences--Math\'ematiques, Universit\'e de Tunis, TN-1060 Tunis,
Tunisie.
\it e-mail: \tt dorra.bourguiba@fst.rnu.tn}}
\date{}

\maketitle

\begin{abstract} In this work, we begin studying the classification, up to isomorphism,
of unstable $\mathrm{H}^{\ast}V-A$-modules $E$ such that $\mathbb{F}_{2} \otimes
_{\mathrm{H}^{\ast}V} E $ is isomorphic to a given unstable $A$-module $M$. In fact
this classification depends on the structure of $M$ as unstable
$A$-module. In this paper, we are interested in the case $M$ a nil-closed
unstable $A$-module and the case $M$ is isomorphic to $\sum^{n}\mathbb{F}_{2}$.
We also study, for $V=\mathbb{Z}/2\mathbb{Z}$, the case $M$ is the Brown-Gitler module $\mathrm{J}(2)$.
\end{abstract}
\maketitle
\section{Introduction} Let $V$ be an elementary abelian 2-group of rank $d$,
that is a group isomorphic to $ (\mathbb{Z}/2 \mathbb{Z})^{d}, \; d
\in \mathbb{N}$, $BV$ be a classifying space for the group $V$ and
$\mathrm{H}^{\ast}V = H^{\ast}(BV;\mathbb{F}_{2})$. We recall that $\mathrm{H}^{\ast}V$ is an
$\mathbb{F}_{2}$-polynomial algebra $\mathbb{F}_{2}[t_{1},\ldots,t_{d}]$
on $d$ generators $t_{i},1 \leq i \leq d$, of degree one.
\medskip\\
Let $A$ be the mod.2 Steenrod algebra and $\mathcal{U}$ the category of
unstable $A$-modules. We recall that $\mathrm{H}^{\ast}V-\mathcal{U}$ is the category
whose objects are unstable $\mathrm{H}^{\ast}V-A$-modules and morphisms are
$\mathrm{H}^{\ast}V$-linear and $A$-linear maps of degree
zero. For example, the mod.2 equivariant cohomology of a $V$-CW-complex, which is
the cohomology of the Borel construction, is
an unstable $\mathrm{H}^{\ast}V-A$-module.
\medskip\\
Let $E$ be an unstable $\mathrm{H}^{*}V-A$-module, we denote by $\overline{E}$ the
unstable $A$-module $\mathbb{F}_{2}
\otimes_{\mathrm{H}^{\ast}V}E = E / \widetilde{\mathrm{H}^{*}}V.E$, where
$\widetilde{\mathrm{H}^{*}}V$ denotes
the augmentation ideal of $\mathrm{H}^{*}V$ .
\medskip\\
We have the following problem:
\medskip
\begin{center}
\hspace{1cm}{\bf {$\mathcal{(P)}$ : Let $M$ be an unstable A-module.\\ Classify,
up to isomorphism, unstable $\mathrm{H}^{\ast}V-A$-modules\\ such that
$\overline{E} \cong M$ (as unstable $A$-modules).}}
\end{center}
It is clear that, for every subgroup $W$ of $V$, the unstable
$\mathrm{H}^{\ast}V-A$-module: $$\mathrm{H}^{\ast}W \otimes M$$
is a solution for the problem $\mathcal{(P)}$.
\medskip\\
For $W=0$, a solution of $\mathcal{(P)}$ is given by the unstable
$\mathrm{H}^{\ast}V-A$-module $M$ which is trivial as an $\mathrm{H}^{\ast}V$-module.
\medskip\\
For $W=V$, a solution of $\mathcal{(P)}$ is given by the unstable
$\mathrm{H}^{\ast}V-A$-module $\mathrm{H}^{\ast}V \otimes M$ which is free as an
$\mathrm{H}^{\ast}V$-module.
\medskip\\
If $V=\mathbb{Z}/2\mathbb{Z}$ and $M=\Sigma N$ a suspension of an
unstable $A$-module $N$, then we have, at least, the following two solutions of the
problem $\mathcal{(P)}$ which are free as $H^{*}(\mathbb{Z}/2\mathbb{Z})$-modules:
\begin{enumerate}
    \item $\Sigma (H^{*}(\mathbb{Z}/2\mathbb{Z}) \otimes N)$.
    \item $((H^{*}(\mathbb{Z}/2\mathbb{Z}) ^{\geq 1}) \otimes N$.
\end{enumerate}
These two solutions are different as unstable $A$-modules (here
$H^{*}(\mathbb{Z}/2\mathbb{Z}) ^{\geq 1}$
is the sub-algebra of $H^{*}(\mathbb{Z}/2\mathbb{Z})$ of elements of
degree bigger than or equal to one). This shows that the solutions of the problem
$\mathcal{(P)}$ i.e. the classification, up to isomorphism, of unstable
$\mathrm{H}^{\ast}V-A$-modules such that $\overline{E} \cong M$ (as unstable
$A$-modules), depends on the structure of $E$ as an $\mathrm{H}^{\ast}V$-module and on
the structure of $M$ as unstable $A$-module.
\medskip\\
In this paper we will discuss the solutions of $\mathcal{(P)}$ if $M$ is a nil-closed unstable
$A$-module and $E$ is free as an $H^{*}V$-module and the solutions of
$ \mathcal{(P)}$ if $M$ is isomorphic to $\sum^{n}\mathbb{F}_{2}$ or to $ \mathrm{J}(2)$ and $E$ is free as an $H^{*}V$-module  .
\medskip\\
We begin by proving the following result (which is solution of $( \mathcal{P})$ when $M$ is a nil-closed
unstable $A$-module ).
\begin{thm} Let $E$ be unstable $\mathrm{H}^{\ast}V-A$-module which is free as an
$\mathrm{H}^{\ast}V$-module.
If $\overline{E}$ is a nil-closed unstable $A$-module, then there exists
two reduced $\mathcal{U}$-injectives $I_{0}, \; I_{1}$ and an $\mathrm{H}^{\ast}V-A$-linear
map\\
$ \varphi:  \mathrm{H}^{\ast}V \otimes I_{0} \rightarrow \mathrm{H}^{\ast}V \otimes I_{1}$
such that:
\begin{enumerate}
\item $E \cong ker \varphi$
\item $\overline{E} \cong ker \overline{\varphi}$
\end{enumerate}
\end{thm}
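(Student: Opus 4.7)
The plan is to first construct a short injective resolution of $\overline{E}$ in $\mathcal{U}$ and then lift it to $\mathrm{H}^{\ast}V-\mathcal{U}$. Since $\overline{E}$ is nil-closed, its $\mathcal{U}$-injective envelope $I_{0}$ is automatically reduced; more generally, the full subcategory of nil-closed unstable $A$-modules is reflective in $\mathcal{U}$ and its injective objects are precisely the reduced $\mathcal{U}$-injectives. Taking the first two terms of an injective resolution in this subcategory produces a reduced injective $I_{1}$ and an $A$-linear map $\overline{\varphi}: I_{0} \to I_{1}$ with $\ker \overline{\varphi} = \overline{E}$, which is exactly condition (2) of the statement.

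I would then set $\varphi := \mathrm{id}_{\mathrm{H}^{\ast}V} \otimes \overline{\varphi} : \mathrm{H}^{\ast}V \otimes I_{0} \to \mathrm{H}^{\ast}V \otimes I_{1}$, which is manifestly $\mathrm{H}^{\ast}V-A$-linear and reduces to $\overline{\varphi}$. The crux of the proof is producing an $\mathrm{H}^{\ast}V-A$-linear embedding $\iota: E \hookrightarrow \mathrm{H}^{\ast}V \otimes I_{0}$ whose reduction is the inclusion $\overline{E} \hookrightarrow I_{0}$. Since $E$ is $\mathrm{H}^{\ast}V$-free, one can pick an $\mathrm{H}^{\ast}V$-basis of $E$ projecting to an $\mathbb{F}_{2}$-basis of $\overline{E}$, and attempt to define $\iota$ by sending a basis element $e_{i}$ to $1 \otimes \overline{e_{i}}$ and extending $\mathrm{H}^{\ast}V$-linearly. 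This is not automatically $A$-linear: the discrepancy between a Steenrod operation acting on $e_{i}$ in $E$ and on $1 \otimes \overline{e_{i}}$ in $\mathrm{H}^{\ast}V \otimes I_{0}$ sits in $\widetilde{\mathrm{H}^{\ast}V} \otimes I_{0}$. My plan is to absorb these correction terms by modifying $\iota$ inductively in total degree, the obstructions being controlled by $\mathrm{Ext}^{1}_{\mathcal{U}}$-groups with target $I_{0}$, which vanish by $\mathcal{U}$-injectivity of $I_{0}$.

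To conclude, I would verify that $\iota$ lands in $\ker \varphi$ and that the induced map $E \to \ker \varphi$ is an isomorphism. The composite $\varphi \circ \iota$ has reduction $\overline{\varphi}$ restricted to $\overline{E}$, which is zero, and the inductive construction is arranged so that $\varphi \circ \iota$ vanishes identically. Applying $\mathbb{F}_{2} \otimes_{\mathrm{H}^{\ast}V}(-)$ to the exact sequence $0 \to \ker \varphi \to \mathrm{H}^{\ast}V \otimes I_{0} \to \mathrm{H}^{\ast}V \otimes I_{1}$ and using that $\mathrm{H}^{\ast}V \otimes I_{1}$ is $\mathrm{H}^{\ast}V$-free (so the relevant $\mathrm{Tor}_{1}$ vanishes) yields $\overline{\ker \varphi} = \ker \overline{\varphi} = \overline{E}$; hence $\overline{\iota}$ is the identity on $\overline{E}$. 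Once $\ker \varphi$ is shown to be $\mathrm{H}^{\ast}V$-free, a graded Nakayama argument promotes $\overline{\iota}$ to an isomorphism $\iota : E \xrightarrow{\sim} \ker \varphi$. The principal obstacle I expect is the construction of $\iota$ in the second step: the vanishing of the obstructions to $A$-linearity hinges on a careful interplay between the $\mathrm{H}^{\ast}V$-freeness of $E$ and the $\mathcal{U}$-injectivity of $I_{0}$, and a secondary but nontrivial point is establishing the $\mathrm{H}^{\ast}V$-freeness of $\ker \varphi$.
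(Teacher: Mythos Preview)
Your proposal contains a genuine gap, and it occurs at the second step: the choice $\varphi := \mathrm{id}_{\mathrm{H}^{\ast}V}\otimes\overline{\varphi}$ cannot work. Since $\mathrm{H}^{\ast}V\otimes_{\mathbb{F}_2}(-)$ is exact, this $\varphi$ has kernel exactly $\mathrm{H}^{\ast}V\otimes\overline{E}$. Your remaining steps, if they went through, would therefore produce an $\mathrm{H}^{\ast}V$--$A$ isomorphism $E\cong \mathrm{H}^{\ast}V\otimes\overline{E}$. But this is false in general: the paper itself exhibits, for even $n$, the two non-isomorphic $\mathrm{H}$--$A$-modules $\mathrm{H}^{\ast}BO(n)$ and $\mathrm{H}\otimes\mathrm{H}^{\ast}BSO(n)$, both $\mathrm{H}$-free with reduction $\mathrm{H}^{\ast}BSO(n)$ (which is nil-closed). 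So the map $\varphi$ must genuinely depend on $E$, not only on $\overline{E}$; consequently your obstruction-theoretic construction of $\iota$, whatever it yields, cannot land inside $\ker(\mathrm{id}\otimes\overline{\varphi})$ in general, and the claim ``the inductive construction is arranged so that $\varphi\circ\iota$ vanishes identically'' has no chance of succeeding for this choice of $\varphi$.

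The paper's argument runs in the opposite direction: one first produces the embedding and reads $\varphi$ off afterwards. The key input (Theorem~3.2.1) is that when $E$ is $\mathrm{H}^{\ast}V$-free and $\overline{E}$ is reduced, the injective hull of $E$ in $\mathrm{H}^{\ast}V$--$\mathcal{U}$ is isomorphic to $\mathrm{H}^{\ast}V\otimes I_{0}$ with $I_{0}$ the $\mathcal{U}$-injective hull of $\overline{E}$; this supplies the embedding $\iota$ for free, with no obstruction theory. One then sets $C_{0}=(\mathrm{H}^{\ast}V\otimes I_{0})/E$, checks via the $\mathrm{Tor}$ long exact sequence that $C_{0}$ is again $\mathrm{H}^{\ast}V$-free, and uses nil-closedness of $\overline{E}$ to see that $\overline{C_{0}}\cong I_{0}/\overline{E}$ is reduced with $\mathcal{U}$-injective hull $I_{1}$. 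Applying Theorem~3.2.1 a second time gives $C_{0}\hookrightarrow \mathrm{H}^{\ast}V\otimes I_{1}$, and $\varphi$ is the composite $\mathrm{H}^{\ast}V\otimes I_{0}\twoheadrightarrow C_{0}\hookrightarrow \mathrm{H}^{\ast}V\otimes I_{1}$. By construction $E=\ker\varphi$ and $\overline{\varphi}=i_{1}$, but $\varphi$ is \emph{not} of the form $\mathrm{id}\otimes i_{1}$ in general.
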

The proof of this result is based on the classification of
$\mathrm{H}^{\ast}V-\mathcal{U}$-injectives and on some properties of the injective
hull in the category $\mathrm{H}^{\ast}V-\mathcal{U}$.
\medskip\\
Our work is naturally motivated by topology as shown in the study of
homotopy fixed points of a $\mathbb{Z}/2$-action (see \cite{L1}). Let $X$ be a
space equipped with an action of $\mathbb{Z}/2$ and
$X^{\mathrm{h}\mathbb{Z}/2}$ denote the space of homotopy fixed
points of this action. The problem of determining the
$\bmod{.\hspace{2pt}2}$ cohomology of $X^{\mathrm{h}\mathbb{Z}/2}$
(we ignore deliberately the questions of $2$-completion) involves
two steps:
\begin{itemize}
\item [--] determining the
$\bmod{.\hspace{2pt}2}$ equivariant cohomology
$\mathrm{H}^{*}_{\mathbb{Z}/2}X$;
\item [--] determining
$\mathrm{Fix}_{\mathbb{Z}/2} \hspace{2pt}
\mathrm{H}^{*}_{\mathbb{Z}/2}X$ (for the definition of the functor
$\mathrm{Fix}_{\mathbb{Z}/2}$ see section 2).
\end{itemize}
For the first step, see for example \cite{DL}, the main
information one has about the $\mathbb{Z}/2$-space $X$ is  that
the Serre spectral sequence, for $\bmod{.\hspace{2pt}2}$
cohomology, associated to the fibration
$$
X\rightarrow X_{\mathrm{h}\mathbb{Z}/2} \rightarrow\mathrm{B}
\mathbb{Z}/2
$$
collapses ($X_{\mathrm{h} \mathbb{Z}/2}$ denotes the Borel
construction $\mathrm{E}\mathbb{Z}/2 \times_{\mathbb{Z}/2}X$).
This collapsing implies that $\mathrm{H}^{*}_{\mathbb{Z}/2}X$ is
$\mathrm{H}$-free and that $\overline{
\mathrm{H}^{*}_{\mathbb{Z}/2}X}$ is canonically isomorphic to
$\mathrm{H}^{*}X$. This gives clearly a topological application of problem $( \mathcal{P})$. \\

We then prove the following results (related to the case $\overline{E}$ is $\sum^{n}\mathbb{F}_{2}$ and $ \mathrm{J}(2)$).
\begin{thm} Let $E$ be unstable $\mathrm{H}^{\ast}V-A$-module which is free as an
$\mathrm{H}^{\ast}V$-module.
If $\overline{E}$ is isomorphic to $\sum^{n}\mathbb{F}_{2}$, then there exists
an element $u$ in $\mathrm{H}^{\ast}V$ such that:
\begin{enumerate}
\item $u = \displaystyle\prod_{i} \theta_{i}^{\alpha_{i}}$, where $\theta_{i} \in
(\mathrm{H}^{1}V)\setminus \{0\}$ and $\alpha_{i} \in \mathbb{N}$
\item $E \cong \sum^{d}u \mathrm{H}^{\ast}V$ with $d+\displaystyle\sum_{i} \alpha_{i}=n$
\end{enumerate}
\end{thm}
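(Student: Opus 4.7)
Since $\overline{E}\cong\Sigma^{n}\mathbb{F}_{2}$ is one-dimensional and concentrated in degree $n$, a graded Nakayama argument lifts a generator of $\overline{E}$ to an element $e\in E$ of degree $n$ generating $E$ over $\mathrm{H}^{\ast}V$; combined with freeness, $E=\mathrm{H}^{\ast}V\cdot e$ is free of rank one.  Each $Sq^{i}e$ then lies in the degree-$(n+i)$ piece $\mathrm{H}^{i}V\cdot e$, so there are unique $\lambda_{i}\in \mathrm{H}^{i}V$ (with $\lambda_{0}=1$ and $\lambda_{i}=0$ for $i>n$ by instability) such that $Sq^{i}e=\lambda_{i}\,e$.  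By the Cartan formula, the whole $\mathrm{H}^{\ast}V$-$A$-module structure on $E$ is encoded in the single element
$$\lambda:=\sum_{i\ge 0}\lambda_{i}\in \mathrm{H}^{\ast}V,\qquad \mathrm{St}(e)=\lambda\,e,$$
where $\mathrm{St}:=\sum_{i\ge 0}Sq^{i}$ denotes the total Steenrod square.

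Next, the theorem reduces to exhibiting an element $u\in \mathrm{H}^{\ast}V$ of the form $u=\prod_{i}\theta_{i}^{\alpha_{i}}$, $\theta_{i}\in \mathrm{H}^{1}V\setminus\{0\}$, satisfying $\mathrm{St}(u)=\lambda\,u$, i.e.\ $Sq^{i}u=\lambda_{i}u$ for every $i$.  Given such a $u$, sending $\Sigma^{n-\deg u}u\mapsto e$ extends uniquely to an $\mathrm{H}^{\ast}V$-linear map
$$\Sigma^{n-\deg u}u\,\mathrm{H}^{\ast}V\;\longrightarrow\;E,$$
which is $A$-linear by the Cartan formula together with the relation $Sq^{i}u=\lambda_{i}u$, and bijective because both sides are $\mathrm{H}^{\ast}V$-free of rank one on a generator of degree $n$.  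Setting $d=n-\deg u=n-\sum_{i}\alpha_{i}$ then delivers the conclusion.  Since $\mathrm{St}$ is a ring endomorphism with $\mathrm{St}(\theta)=\theta(1+\theta)$ for $\theta\in \mathrm{H}^{1}V$, multiplicativity gives $\mathrm{St}\bigl(\prod\theta_{i}^{\alpha_{i}}\bigr)=\bigl(\prod\theta_{i}^{\alpha_{i}}\bigr)\cdot\prod(1+\theta_{i})^{\alpha_{i}}$, so producing $u$ is equivalent to the factorisation problem
$$\lambda=\prod_{i}(1+\theta_{i})^{\alpha_{i}}\quad\text{in }\mathrm{H}^{\ast}V.$$

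The crux, and the step I expect to be the main obstacle, is establishing this factorisation.  One inclusion is automatic: any $u=\prod\theta_{i}^{\alpha_{i}}$ generates an $A$-stable principal ideal, and yields a $\lambda$ of the claimed product form.  For the converse, one has to exploit the constraints imposed on the $\lambda_{i}$ by the Adem relations applied to $e$.  The approach I would take is a double induction.  First, for $V=\mathbb{Z}/2\mathbb{Z}$, writing $\lambda_{i}=c_{i}t^{i}$ with $c_{i}\in\mathbb{F}_{2}$, the Adem relations force $c_{i}=\prod_{j:\,i_{j}=1}c_{2^{j}}$ (where $i=\sum i_{j}2^{j}$ in binary), so the integer $k:=\sum_{j}c_{2^{j}}2^{j}$ satisfies $c_{i}=\binom{k}{i}\bmod 2$ by Lucas, and $u=t^{k}$ does the job.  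For general $V$, one peels off a linear factor: the class $\lambda_{1}\in \mathrm{H}^{1}V$ serves as a candidate $\theta$, and one shows that $\lambda/(1+\theta)$ again arises as the total Steenrod eigenvalue of an unstable rank-one $\mathrm{H}^{\ast}V$-$A$-module to which the inductive hypothesis applies.  Alternatively, one invokes the classical characterisation (Serre--Wilkerson) of $A$-stable principal ideals of $\mathrm{H}^{\ast}V$ as exactly those generated by products of linear forms, combined with a realisability argument producing some $u$ with $\mathrm{St}(u)=\lambda u$; the two together force $u$ into the required product form and complete the proof.
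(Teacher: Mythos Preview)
Your reduction is correct and well-executed up to the point you yourself flag as the crux: $E$ is indeed $\mathrm{H}^{\ast}V$-free of rank one on a generator $e$ of degree $n$, and the whole structure is encoded in the element $\lambda=\sum_{i}\lambda_{i}$ with $Sq^{i}e=\lambda_{i}e$.  The issue is that neither of your two proposed routes for the factorisation $\lambda=\prod(1+\theta_{i})^{\alpha_{i}}$ is complete.  The peeling induction via $\theta=\lambda_{1}$ stalls when $\lambda_{1}=0$ but $\lambda\neq 1$ (the Adem relations do eventually rule out such $\lambda$, but you have not shown this, and it is not a one-line check).  The Serre--Wilkerson alternative presupposes the existence of \emph{some} $u$ with $\mathrm{St}(u)=\lambda u$, i.e.\ an embedding of $E$ as a principal $A$-stable ideal of a suspension of $\mathrm{H}^{\ast}V$; your ``realisability argument'' is exactly the missing step, and it is not clear how to produce it without something external.

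The paper supplies precisely that external input via $\mathrm{Fix}$-theory, and this is the main difference in approach.  One uses the equality $\dim\overline{E}=\dim\mathrm{Fix}_{V}E$ to get $\mathrm{Fix}_{V}E\cong\Sigma^{l}\mathbb{F}_{2}$ for some $l$, and then the unit map $\eta_{V}\colon E\hookrightarrow \mathrm{H}^{\ast}V\otimes\mathrm{Fix}_{V}E=\Sigma^{l}\mathrm{H}^{\ast}V$ (injective because $E$ is $\mathrm{H}^{\ast}V$-free) realises $E$ as $\Sigma^{l}E'$ with $E'$ an $A$-stable sub-$\mathrm{H}^{\ast}V$-module of $\mathrm{H}^{\ast}V$.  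Freeness of rank one forces $E'=u\,\mathrm{H}^{\ast}V$, and Serre's theorem on nonzero $A$-stable ideals of $\mathrm{H}^{\ast}V$ gives $c_{V}^{N}\mathrm{H}^{\ast}V\subset u\,\mathrm{H}^{\ast}V$, whence $u\mid c_{V}^{N}$ in the UFD $\mathrm{H}^{\ast}V$ and $u$ is a product of linear forms.  In your language, $\eta_{V}$ hands you the element $u$ for free and Serre's theorem replaces the Adem-relation gymnastics; conversely, your eigenvalue formulation makes transparent \emph{why} the classification takes this shape, but does not by itself close the argument for general $V$.
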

\begin{prop} Let $E$ be an $ \mathrm{H}-A$-module which is $ \mathrm{H}$-free and such that $\overline{E}$
is isomorphic to $ \mathrm{J}(2)$ then:\\
$ E \cong \mathrm{H} \otimes  \mathrm{J}(2)$\\
or \\
$E$ is the sub-$\mathrm{H}-A$-module of $\mathrm{H} \oplus \sum \mathrm{H}$ generated by $(t, \Sigma1)$ and $(t^{2},0)$.
\end{prop}
The proofs of these two results are based on Smith theory, some properties of the functor $\mathrm{F}ix$
and on a result of J.P. Serre.
\bigskip\\
The paper is structured as follows. In section 2, we introduce the
definitions of reduced and nil-closed unstable $A$-modules. We give
the classification of injective modules in the category $\mathcal{U}$ and
in the category $H^{*}V-\mathcal{U}$. We also recall the algebraic Smith theory.
In section 3, we establish some
properties of $E$ when $\overline{E}$ is a reduced unstable $A$-module.
The results will be useful in section 4, where we give the solutions
of the problem ($\mathcal{P}$) when $E$ is free as an $\mathrm{H}^{\ast}V$-module
and $\overline{E}$ is nil-closed.
In section 5, we give some topological applications.
In section 6, we give the solutions
of the problem ($\mathcal{P}$) when $E$ is free as an $\mathrm{H}^{\ast}V$-module
and $\overline{E}$ is isomorphic to $\sum^{n}\mathbb{F}_{2}$, we also give a topological application.
In section 7, we solve the problem ($\mathcal{P}$) when
$\overline{E}$ is the Brown-Gitler module $\mathrm{J}(2)$ and $V$ is $\mathbb{Z}/2\mathbb{Z}$.

\textbf{Acknowledgements.} I would like to thank Professor Jean Lannes and Professor Said Zarati for several useful discussions. I am grateful to
the referee for his suggestions.

\section{Preliminaries on the categories $\mathcal{U}$ and
$\mathrm{H}^{\ast}V-\mathcal{U}$}
In this section, we will fix some notations, recall some definitions and
results about the categories $\mathcal{U}$ and $\mathrm{H}^{\ast}V-\mathcal{U}$.
\subsection{Nilpotent unstable $A$-modules}
Let $N$ be an unstable $A$-module. We denote by $Sq_{0}$ the
$\mathbb{Z}/2\mathbb{Z}$-linear map:
$$Sq_{0}: N \rightarrow N, \; x \mapsto Sq_{0}(x)=Sq^{\mid x \mid}x.$$
An unstable $A$-module $N$ is called nilpotent if:
\begin{center}
$ \forall \; x \in N, \; \exists\; n \in \mathbb{N}; \; Sq_{0}^{n}x=0.$
\end{center}
For example, finite unstable $A$-modules and suspension of unstable $A$-modules
are nilpotent. Let $Tor^{\mathrm{H}^{\ast}V}_{1}(\mathbb{F}_{2}, N)$ be the first
derived functor of the functor $\mathbb{F}_{2} \otimes_{\mathrm{H}^{\ast}V}- \; :
\mathrm{H}^{\ast}V-\mathcal{U} \rightarrow \mathcal{U}$, we have the following
useful result.
\begin{propo} (\cite{S} page 150)
Let $N$ be an unstable $\mathrm{H}^{\ast}V-A$-module, then the unstable $A$-module
$Tor^{\mathrm{H}^{\ast}V}_{1}(\mathbb{F}_{2}, N)$ is nilpotent.
\end{propo}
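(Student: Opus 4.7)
The plan is to reduce to the rank-one case $V = \mathbb{Z}/2\mathbb{Z}$ and then identify $\mathrm{Tor}_1^{H^*V}(\mathbb{F}_2, N)$ explicitly as a suspension of an unstable $A$-module, which is automatically nilpotent.

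For the reduction to rank one, I would use the Grothendieck change-of-rings spectral sequence. Writing $V = V' \oplus V_1$ with $V_1$ one-dimensional and $H^*V \cong H^*V' \otimes H^*V_1$, one has
$$E^2_{p,q} = \mathrm{Tor}_p^{H^*V_1}\!\bigl(\mathbb{F}_2, \mathrm{Tor}_q^{H^*V'}(\mathbb{F}_2, N)\bigr) \Longrightarrow \mathrm{Tor}_{p+q}^{H^*V}(\mathbb{F}_2, N).$$
The nilpotent unstable $A$-modules form a Serre subcategory of $\mathcal{U}$, closed under subquotients and extensions. Since $V_1$ has rank one, $\mathrm{Tor}_p^{H^*V_1}$ vanishes for $p \geq 2$, so $\mathrm{Tor}_1^{H^*V}(\mathbb{F}_2, N)$ is built by an extension from $E^\infty_{0,1}$ and $E^\infty_{1,0}$. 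Under induction on the rank of $V$, the term $E^2_{0,1}$ is a quotient of the nilpotent module $\mathrm{Tor}_1^{H^*V'}(\mathbb{F}_2, N)$, while $E^2_{1,0}$ is the rank-one $\mathrm{Tor}_1$ of the $H^*V_1$-$A$-module $\mathrm{Tor}_0^{H^*V'}(\mathbb{F}_2, N)$. Hence it is enough to treat the rank-one case.

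For $V = \mathbb{Z}/2$ with $H^*V = \mathbb{F}_2[t]$, I would use the short free resolution
$$0 \longrightarrow \widetilde{H^*V} \hookrightarrow H^*V \longrightarrow \mathbb{F}_2 \longrightarrow 0$$
in $H^*V\text{-}\mathcal{U}$. The identification $\widetilde{H^*V} \otimes_{H^*V} N \cong N$ via $th \otimes n \leftrightarrow hn$ (with a grading shift by $+1$) turns the induced differential into multiplication by $t$, yielding
$$\mathrm{Tor}_1^{H^*V}(\mathbb{F}_2, N) \;\cong\; \Sigma\,\ker\bigl(\,\cdot t \,:\, N \to N\bigr).$$
The crucial point is that $\ker(\cdot t) \subset N$ is automatically an $A$-submodule of $N$: if $tm = 0$, then the Cartan formula, combined with $Sq^1 t = t^2$ and $Sq^{\geq 2} t = 0$, gives $t\,Sq^i m + t^2\,Sq^{i-1} m = 0$ for every $i \geq 1$, and induction on $i$ starting from $tm = 0$ forces $t\,Sq^i m = 0$ for all $i \geq 0$. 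Thus the induced $A$-module structure on $\mathrm{Tor}_1$ is exactly the suspension of $\ker(\cdot t)$ equipped with the inherited action.

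It remains to note that every suspension has $Sq_0 = 0$: for $\Sigma x \in (\Sigma M)^{k+1}$ with $|x| = k$, instability in $M$ gives $Sq_0(\Sigma x) = Sq^{k+1}\Sigma x = \Sigma\,Sq^{k+1} x = 0$. Consequently $Sq_0$ acts as zero on $\mathrm{Tor}_1^{H^*V}(\mathbb{F}_2, N)$, finishing the rank-one case. The main obstacle to handle with care will be verifying that the change-of-rings spectral sequence upgrades to a spectral sequence of unstable $A$-modules, so that the Serre-subcategory argument applies termwise; the rank-one computation itself is a formal consequence of the Cartan formula once one observes that $t$-torsion is $A$-invariant.
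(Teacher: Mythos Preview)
The paper does not actually prove this proposition: it is quoted verbatim from Schwartz's book (\cite{S}, page~150) and used as a black box, so there is no ``paper's own proof'' to compare against. Your task, then, is really to supply an independent argument, and the one you sketch is essentially correct.

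Your rank--one computation is the heart of the matter and is sound. Using the short exact sequence $0\to\widetilde{\mathrm{H}}\hookrightarrow\mathrm{H}\to\mathbb{F}_{2}\to0$ in $\mathrm{H}\text{-}\mathcal{U}$, with both nontrivial terms $\mathrm{H}$--free, identifies $\mathrm{Tor}_{1}^{\mathrm{H}}(\mathbb{F}_{2},N)$ with the kernel of $\widetilde{\mathrm{H}}\otimes_{\mathrm{H}}N\to N$. The only delicate point is that the obvious bijection $\widetilde{\mathrm{H}}\otimes_{\mathrm{H}}N\cong N$ (degree shifted) is \emph{not} $A$--linear on the nose: one has $Sq^{i}(t\otimes n)=t\otimes(Sq^{i}n+t\,Sq^{i-1}n)$ in general. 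Your Cartan--formula induction, showing that $\ker(\cdot\,t)$ is an $A$--submodule, is precisely what kills the extra term $t\,Sq^{i-1}n$ on the kernel and makes the identification $\mathrm{Tor}_{1}\cong\Sigma\ker(\cdot\,t)$ genuinely $A$--linear. You should make this cancellation explicit when you write it up.

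The reduction to rank one via the change--of--rings spectral sequence is also fine; the point you flag --- that the spectral sequence lives in $\mathcal{U}$ --- follows once you observe that $N$ admits a resolution in $\mathrm{H}^{\ast}V\text{-}\mathcal{U}$ by objects of the form $\mathrm{H}^{\ast}V\otimes M$ (iterate the counit $\mathrm{H}^{\ast}V\otimes N\twoheadrightarrow N$), and that such a resolution, restricted along $\mathrm{H}^{\ast}V'\hookrightarrow\mathrm{H}^{\ast}V$, remains a resolution by $\mathrm{H}^{\ast}V'$--free objects in $\mathrm{H}^{\ast}V'\text{-}\mathcal{U}$. From there the filtration and convergence are formal and $A$--equivariant. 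An alternative that avoids the spectral sequence altogether is to iterate your rank--one argument directly, using the tower $\mathrm{H}^{\ast}V\to\mathrm{H}^{\ast}V/(t_{d})\to\cdots\to\mathbb{F}_{2}$ together with the long exact Tor sequence; either route is acceptable.
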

\subsection{Reduced unstable $A$-modules}
An unstable $A$-module $M$ is called reduced if the
$\mathbb{Z}/2\mathbb{Z}$-linear map:
$$Sq_{0}: M \rightarrow M, \; x \mapsto Sq_{0}(x)=Sq^{\mid x \mid}x,$$
is an injection.
\smallskip\\
Another characterization of reduced unstable $A$-module in terms of
nilpotent modules is the following.
\begin{lema} (\cite{LZ1}) An unstable $A$-module is reduced if it
does not contain a non-trivial nilpotent module.
\end{lema}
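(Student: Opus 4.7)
The lemma is stated as a characterization, so I read it as a biconditional: $M$ is reduced if and only if $M$ contains no non-trivial nilpotent $A$-submodule. I would prove each direction in turn.

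The forward direction is immediate from the definitions. If $M$ is reduced then $Sq_{0}$ is injective on $M$, hence so is every iterate $Sq_{0}^{n}$. Consequently any $x \in M$ with $Sq_{0}^{n} x = 0$ for some $n$ must already be zero. A nonzero nilpotent $A$-submodule $N \subseteq M$ would contain some nonzero element killed by a power of $Sq_{0}$, which contradicts this.

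For the converse, I would argue by contrapositive: assuming $M$ is not reduced, I would produce a nonzero nilpotent $A$-submodule. The natural candidate is the ``nilradical''
$$
\mathrm{Nil}(M) := \{\, x \in M \mid Sq_{0}^{n} x = 0 \text{ for some } n \geq 0 \,\},
$$
which is nonzero by the assumed failure of $Sq_{0}$-injectivity, is an $\mathbb{F}_{2}$-subspace (because the maximum of two nilpotence exponents still works), and is $Sq_{0}$-nilpotent element-wise by construction.

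The only non-formal point, and where I expect the real work to lie, is checking that $\mathrm{Nil}(M)$ is stable under the full $A$-action, not merely under $Sq_{0}$. Given $x$ with $Sq_{0}^{n} x = 0$ and an operation $Sq^{i}$, I must exhibit $m$ with $Sq_{0}^{m}(Sq^{i} x) = 0$. When $i > |x|$ instability gives $Sq^{i} x = 0$ and there is nothing to prove; in the remaining range $i \leq |x|$, the plan is to expand $Sq_{0}^{m} \circ Sq^{i}$ via the Adem relations and then use instability ($Sq^{j} y = 0$ for $j > |y|$) to show that, once $m$ is large enough in terms of $n$ and $i$, every surviving admissible summand contains a right-most factor of the form $Sq_{0}^{k}$ with $k \geq n$ applied to $x$, and therefore vanishes. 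This is precisely the ``$\mathcal{N}il$ is a Serre subcategory of $\mathcal{U}$'' stability statement carried out in \cite{LZ1}, so in the write-up I would invoke that reference rather than reproduce the Adem-relation bookkeeping. With stability in hand, $\mathrm{Nil}(M)$ is a nonzero nilpotent $A$-submodule, which completes the contrapositive and hence the lemma.
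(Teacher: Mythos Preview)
The paper does not prove this lemma; it is simply quoted from \cite{LZ1} as background in the preliminaries, so there is no ``paper's own proof'' to compare against. Your argument is correct and is the standard one.

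One remark on the only step you flagged as non-formal: the $A$-stability of $\mathrm{Nil}(M)$ does not require any elaborate Adem bookkeeping. On an unstable module one has the identity
\[
Sq_{0}\,Sq^{i} \;=\; Sq^{2i}\,Sq_{0},
\]
which is exactly the $A$-linearity of the natural map $\lambda_{M}:\Phi M\to M$, $\Phi x\mapsto Sq_{0}x$. Iterating gives $Sq_{0}^{n}(Sq^{i}x)=Sq^{2^{n}i}(Sq_{0}^{n}x)$, so $Sq_{0}^{n}x=0$ forces $Sq_{0}^{n}(Sq^{i}x)=0$ with the \emph{same} exponent $n$. Thus $\mathrm{Nil}(M)$ is visibly closed under every $Sq^{i}$, and hence under $A$. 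This replaces the paragraph where you anticipate having to expand $Sq_{0}^{m}\circ Sq^{i}$ into admissibles and take $m$ large; no such enlargement of the exponent is needed.
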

In particular, any $A$-linear map from a nilpotent $A$-module to a reduced
one is trivial.
\subsection{Nil-closed unstable $A$-modules}
Let $M$ be an unstable $A$-module. We denote by $Sq_{1}$ the
$\mathbb{Z}/2\mathbb{Z}$-linear map:
$$Sq_{1}: N \rightarrow N, \; x \mapsto Sq_{1}(x)=Sq^{\mid x \mid-1}x.$$
\begin{defnt} (\cite{EP}) An unstable $A$-module $M$ is called nil-closed if:
\begin{center}
\begin{enumerate}
    \item $M$ is reduced.
    \item $Ker(Sq_{1})=Im(Sq_{0})$.
\end{enumerate}
\end{center}
\end{defnt}
We have the following two characterizations of unstable nil-closed
$A$-modules.
\begin{lema} (\cite{LZ1})
Let $M$ be an unstable $A$-module and $\mathcal{E}(M)$ be its
injective hull. The unstable $A$-module $M$ is nil-closed if and only if
$M$ and the quotient $ \mathcal{E}(M)/ M $ are reduced.
\end{lema}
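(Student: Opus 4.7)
The plan is to exploit two ingredients drawn from the general theory of $\mathcal{U}$: first, that if $M$ is reduced then so is its injective hull $\mathcal{E}(M)$; second, that every reduced $\mathcal{U}$-injective is already nil-closed. The first point follows from Lemma 2.2 together with essentiality of the inclusion $M \hookrightarrow \mathcal{E}(M)$: any nilpotent submodule $N$ of $\mathcal{E}(M)$ would meet $M$ in a nontrivial nilpotent submodule of $M$, forcing $N=0$. The second point relies on the classification of reduced $\mathcal{U}$-injectives as summands of products of $\mathrm{H}^{\ast}V$'s, where the equality $\mathrm{Ker}(Sq_{1})=\mathrm{Im}(Sq_{0})$ is checked by a direct computation in the polynomial algebra $\mathbb{F}_{2}[t_{1},\dots,t_{d}]$ (an element killed by $Sq_{1}$ is necessarily a square). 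A further elementary input is the Adem relation $Sq^{2m-1}Sq^{m}=0$, which yields $Sq_{1}Sq_{0}=0$ on every unstable $A$-module.

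For the forward direction, assume $M$ is nil-closed. Given $\bar{y} \in \mathcal{E}(M)/M$ with $Sq_{0}\bar{y}=0$, lift to $y \in \mathcal{E}(M)$, so that $x:=Sq_{0}y$ lies in $M$. Since $Sq_{1}Sq_{0}=0$, we get $Sq_{1}x=0$, and nil-closedness of $M$ produces $z \in M$ with $Sq_{0}z=x$. Then $Sq_{0}(y-z)=0$ inside the reduced module $\mathcal{E}(M)$, so $y=z \in M$ and $\bar{y}=0$. Hence $Sq_{0}$ is injective on $\mathcal{E}(M)/M$, i.e., $\mathcal{E}(M)/M$ is reduced.

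For the reverse direction, assume both $M$ and $\mathcal{E}(M)/M$ are reduced, and let $x \in M$ with $Sq_{1}x=0$. Since $\mathcal{E}(M)$ is a reduced $\mathcal{U}$-injective, it is nil-closed, so there exists $z \in \mathcal{E}(M)$ with $x=Sq_{0}z$. The class $\bar{z} \in \mathcal{E}(M)/M$ satisfies $Sq_{0}\bar{z}=0$, and reducedness of $\mathcal{E}(M)/M$ forces $\bar{z}=0$, so $z \in M$ and $x \in \mathrm{Im}(Sq_{0}|_{M})$, as required.

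The main obstacle is the invocation, in the reverse direction, of the fact that reduced $\mathcal{U}$-injectives are nil-closed: this is the substantive ingredient, depending on the classification of such injectives in $\mathcal{U}$, whereas the rest of the argument is a short formal manipulation with $Sq_{0}$, $Sq_{1}$ and essentiality of the injective hull.
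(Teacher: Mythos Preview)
The paper does not supply its own proof of this lemma; it simply quotes the statement from \cite{LZ1}. Your argument is correct and is precisely the standard route: reduce both implications to a diagram chase with $Sq_{0}$ and $Sq_{1}$, using (i) that $\mathcal{E}(M)$ is reduced whenever $M$ is (essentiality plus Lemma~2.2.1), (ii) the Adem relation $Sq^{2m-1}Sq^{m}=0$ giving $Sq_{1}Sq_{0}=0$, and (iii) the nontrivial input that a reduced $\mathcal{U}$-injective is itself nil-closed. You correctly flag (iii) as the only substantive ingredient, and your justification---via the classification of reduced $\mathcal{U}$-injectives as retracts of the $\mathrm{H}^{\ast}V$, where the identity $\mathrm{Ker}(Sq_{1})=\mathrm{Im}(Sq_{0})$ is an elementary polynomial computation---is exactly how it is handled in \cite{LZ1}. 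So there is nothing to compare: your proof is the expected one and matches the source the paper cites.
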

Let $Ext^{s}_{\mathcal{U}}(-, M)$ be the s-th derived functor of the
functor $\mathrm{H}om_{\mathcal{U}}(-, M)$.
\begin{lema} (\cite{LZ1}) An unstable $A$-module $M$ is nil-closed if and only if
$Ext^{s}_{\mathcal{U}}(N, M)=0$ for any nilpotent unstable $A$-module $N$
and $s=0,1$.
\end{lema}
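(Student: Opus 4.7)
The plan is to use Lemma 2.3.2 as a bridge: $M$ is nil-closed if and only if both $M$ and $\mathcal{E}(M)/M$ are reduced. Throughout, I will use the observation (stated just after Lemma 2.2.1) that $\mathrm{Hom}_{\mathcal{U}}(N, R) = 0$ whenever $N$ is nilpotent and $R$ is reduced, since the image of such a map would be a nilpotent submodule of a reduced module, and a submodule of a reduced module is reduced while a quotient of a nilpotent one is nilpotent.

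For the direction $(\Rightarrow)$, suppose $M$ is nil-closed and $N$ is nilpotent. The case $s = 0$ follows directly from the Hom-vanishing observation applied to the reduced module $M$. For $s = 1$, I would apply $\mathrm{Hom}_{\mathcal{U}}(N, -)$ to the short exact sequence $0 \to M \to \mathcal{E}(M) \to \mathcal{E}(M)/M \to 0$ and read off the relevant piece of the resulting long exact sequence:
$$\mathrm{Hom}_{\mathcal{U}}(N, \mathcal{E}(M)/M) \longrightarrow \mathrm{Ext}^{1}_{\mathcal{U}}(N, M) \longrightarrow \mathrm{Ext}^{1}_{\mathcal{U}}(N, \mathcal{E}(M)).$$
The right-hand term vanishes because $\mathcal{E}(M)$ is $\mathcal{U}$-injective, and the left-hand term vanishes by the Hom-vanishing observation applied to the reduced module $\mathcal{E}(M)/M$ (which is reduced by Lemma 2.3.2). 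Hence $\mathrm{Ext}^{1}_{\mathcal{U}}(N, M) = 0$.

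For the converse $(\Leftarrow)$, the $s = 0$ hypothesis applied to any nilpotent submodule $N \hookrightarrow M$ forces $N = 0$, so Lemma 2.2.1 gives that $M$ is reduced. To deduce that $\mathcal{E}(M)/M$ is also reduced, I would argue by contradiction: assume it contains a nonzero nilpotent submodule $N$, and let $E$ be the preimage of $N$ under the projection $\mathcal{E}(M) \twoheadrightarrow \mathcal{E}(M)/M$. Then
$$0 \longrightarrow M \longrightarrow E \longrightarrow N \longrightarrow 0$$
is a short exact sequence in $\mathcal{U}$ whose class in $\mathrm{Ext}^{1}_{\mathcal{U}}(N, M)$ vanishes by hypothesis. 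The extension therefore splits, producing a section $s : N \hookrightarrow \mathcal{E}(M)$ whose image is a nonzero submodule meeting $M$ trivially. This contradicts the essentiality of the extension $M \hookrightarrow \mathcal{E}(M)$ inherent in the notion of injective hull in $\mathcal{U}$, so no such $N$ exists. Lemma 2.3.2 then yields that $M$ is nil-closed.

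The only step that requires extrinsic input is the final essentiality argument, for which one invokes that $\mathcal{E}(M)$ is an essential injective extension of $M$ in $\mathcal{U}$; all other steps reduce to the standard long exact sequence of $\mathrm{Ext}$ combined with the Hom-vanishing principle between nilpotent and reduced unstable $A$-modules, so I expect the essentiality invocation to be the only genuinely non-formal ingredient.
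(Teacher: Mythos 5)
Your proof is correct. Note first that the paper does not prove this lemma at all: it is quoted from Lannes--Zarati \cite{LZ1} without argument, so there is no in-paper proof to compare against. Your derivation from the injective-hull characterization (Lemma 2.3.2) is the standard one and all the steps check out: the Hom-vanishing principle between nilpotents and reduced modules is exactly the remark following Lemma 2.2.1; the long exact sequence for $0 \to M \to \mathcal{E}(M) \to \mathcal{E}(M)/M \to 0$ kills $\mathrm{Ext}^{1}$ in the forward direction; and in the converse the splitting of the pulled-back extension contradicts essentiality of $M \hookrightarrow \mathcal{E}(M)$, which is the same essentiality criterion the paper itself invokes in the proof of Theorem 3.2.1. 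The only point you leave implicit is the identification of the derived-functor $\mathrm{Ext}^{1}_{\mathcal{U}}$ with the Yoneda group of extensions, which is legitimate since $\mathcal{U}$ is an abelian category with enough injectives; it would be worth one sentence to say so, but it is not a gap.
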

\subsection{Injectives in the category $\mathcal{U}$}
Let $I$ be an unstable $A$-module, $I$ is called an injective in the category
$\mathcal{U}$ or $\mathcal{U}$-injective for short, if the functor
$\mathrm{H}om_{\mathcal{U}}(-, I)$ is exact.\\
The classification of $\mathcal{U}$-injectives (see \cite{LZ1},
\cite {LS}) is the following.\\
Let $\mathrm{J}(n), \; n \in \mathbb{N}$, be the $n$-th Brown- Gitler module,
characterized up to isomorphism, by the functorial bijection on the
unstable $A$-module M:
$$\mathrm{H}om_{\mathcal{U}}(M,\mathrm{J}(n)) \cong \mathrm{H}om_{\mathbb{F}_{2}}(M^{n},
\mathbb{F}_{2})$$
Clearly $\mathrm{J}(n)$ is an $\mathcal{U}$-injective and it is a finite module.\\
Let $\mathcal{L}$ be a set of representatives for $\mathcal{U}$-isomorphism
classes of indecomposable direct factors of
$\mathrm{H}^{\ast}(\mathbb{Z}/2\mathbb{Z})^{m},\; m \in \mathbb{N}$ (each class is
represented in $\mathcal{L}$ only once).\\ We have:
\begin{theo}
Let $I$ be an $\mathcal{U}$-injective module. Then there exists a set of
 cardinals $a_{L,n} \; , (L,n) \in \mathcal{L} \times
\mathbb{N}$, such
that $ I \cong \displaystyle \bigoplus_{(L,n)} (L \otimes \mathrm{J}(n))^{ \oplus a_{L,n}} $ .\\
Conversely, any
unstable $A$-module of that form is $\mathcal{U}$-injective.
\end{theo}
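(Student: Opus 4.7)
The plan is to prove the two implications separately, relying on the Lannes $\mathrm{T}$-functor machinery and the standard theory of injective hulls in a locally Noetherian Grothendieck category.

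For the implication that every module of the form $\bigoplus_{(L,n)} (L \otimes \mathrm{J}(n))^{\oplus a_{L,n}}$ is $\mathcal{U}$-injective, the crucial input is the Carlsson-Miller theorem asserting that $\mathrm{H}^{\ast}V$ is $\mathcal{U}$-injective for every elementary abelian 2-group $V$. Using that the Lannes functor $\mathrm{T}_{V}$ is exact and left adjoint to $\mathrm{H}^{\ast}V \otimes -$, together with the defining property of $\mathrm{J}(n)$, one obtains the chain of natural isomorphisms
$$\mathrm{Hom}_{\mathcal{U}}(M,\; \mathrm{H}^{\ast}V \otimes \mathrm{J}(n)) \;\cong\; \mathrm{Hom}_{\mathcal{U}}(\mathrm{T}_{V} M,\; \mathrm{J}(n)) \;\cong\; ((\mathrm{T}_{V} M)^{n})^{\ast},$$
which is exact in $M$. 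Hence $\mathrm{H}^{\ast}V \otimes \mathrm{J}(n)$ is $\mathcal{U}$-injective. Each $L \in \mathcal{L}$ is by definition a direct summand of some $\mathrm{H}^{\ast}(\mathbb{Z}/2\mathbb{Z})^{m}$, so $L \otimes \mathrm{J}(n)$ is a direct summand of $\mathrm{H}^{\ast}(\mathbb{Z}/2\mathbb{Z})^{m} \otimes \mathrm{J}(n)$ and therefore $\mathcal{U}$-injective. Since $\mathcal{U}$ is a locally Noetherian Grothendieck category, arbitrary direct sums of $\mathcal{U}$-injectives remain $\mathcal{U}$-injective, finishing this direction.

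For the converse, I would invoke the decomposition theory of injectives in locally Noetherian categories. The first step is to identify the indecomposable $\mathcal{U}$-injectives as precisely the modules $L \otimes \mathrm{J}(n)$ with $(L,n) \in \mathcal{L} \times \mathbb{N}$: each such module has a simple socle concentrated in its bottom degree, is the $\mathcal{U}$-injective hull of that socle, and distinct pairs $(L,n)$ yield non-isomorphic modules (since the pair $(L,n)$ can be recovered from the bottom-degree information and the $L$-part of the top quotient). Any indecomposable $\mathcal{U}$-injective is the injective hull of its (necessarily simple) socle and so must appear on this list. The general $\mathcal{U}$-injective $I$ then decomposes as the direct sum of the injective hulls of a maximal essentially-disjoint family of simple submodules, produced via Zorn's lemma; this yields the desired direct sum decomposition.

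The main obstacle is the deep input of the Carlsson-Miller theorem together with the exactness of $\mathrm{T}_{V}$: essentially all of the injectivity assertions collapse without these. A secondary difficulty is the precise classification of indecomposable $\mathcal{U}$-injectives, which requires verifying that every such module arises as the injective hull of a simple unstable $A$-module and that the combinatorics of simple socles matches $\mathcal{L} \times \mathbb{N}$; once these are in place, the rest is formal locally-Noetherian bookkeeping.
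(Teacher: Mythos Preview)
The paper does not prove this theorem at all: it is stated in the preliminaries (Section~2.4) as a result quoted from \cite{LZ1} and \cite{LS}, with no argument given. So there is no ``paper's own proof'' to compare your proposal against.

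That said, your outline is broadly in the spirit of the original Lannes--Schwartz/Lannes--Zarati arguments, and the converse direction is essentially right: Carlsson--Miller injectivity of $\mathrm{H}^{\ast}V$ together with the exactness of $\mathrm{T}_{V}$ does yield injectivity of $\mathrm{H}^{\ast}V\otimes\mathrm{J}(n)$, hence of each $L\otimes\mathrm{J}(n)$, and the locally Noetherian property of $\mathcal{U}$ handles arbitrary direct sums. For the forward direction, the Matlis--Gabriel decomposition of injectives in a locally Noetherian Grothendieck category is the correct framework, but your identification of the indecomposable injectives is too quick. The assertion that each indecomposable $\mathcal{U}$-injective has a \emph{simple} socle, and that this socle already determines a pair $(L,n)$, is not obvious and is not how the original proof proceeds; in \cite{LS} the argument goes instead through the nil-filtration of $\mathcal{U}$, the identification of reduced indecomposable injectives with the elements of $\mathcal{L}$ (using the Adams--Gunawardena--Miller computation of $\mathrm{Hom}_{\mathcal{U}}(\mathrm{H}^{\ast}V,\mathrm{H}^{\ast}W)$), and then a separate analysis showing that every indecomposable injective is of the form $L\otimes\mathrm{J}(n)$. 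If you want your sketch to stand on its own you would need to supply that step, or at least explain why in $\mathcal{U}$ every nonzero injective contains a simple subobject whose injective hull has the asserted form.
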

Let's remark that $\mathrm{H}^{\ast}V$ is an $\mathcal{U}$-injective.
\subsection{The injectives of the category $\mathrm{H}^{\ast}V-\mathcal{U}$}
The classification of injectives of the category $\mathrm{H}^{\ast}V-\mathcal{U}\;
\; (\mathrm{H}^{\ast}V-\mathcal{U}$-injectives
for short) is given by Lannes-Zarati \cite{LZ2} as follows.\\
Let $\mathrm{J}_{V}(n), \; n \in \mathbb{N}$, be the unstable $\mathrm{H}^{\ast}V-A$-module
characterized, up to isomorphism, by the functorial bijection on the
unstable $\mathrm{H}^{\ast}V-A$-module M:
$$\mathrm{H}om_{\mathrm{H}^{\ast}V-\mathcal{U}}(M,\mathrm{J}_{V}(n)) \cong
\mathrm{H}om_{\mathbb{F}_{2}}(M^{n},\mathbb{F}_{2})$$
Clearly $\mathrm{J}_{V}(n)$ is an $\mathrm{H}^{\ast}V-\mathcal{U}$-injective.\\
Let $\mathcal{W}$ be the set of subgroups of $V$ and let $(W,n) \in \mathcal{W}
\times \mathbb{N}$, we write $$E(V,W,n) =
\mathrm{H}^{\ast}V \otimes_{\mathrm{H}^{\ast}V/W} \mathrm{J}_{V/W}(n)$$
(in this formula $\mathrm{H}^{\ast}V$
is an $\mathrm{H}^{\ast}V/W$-module via the map induced in mod.2 cohomology by the
canonical projection $ V \rightarrow V/W $).\\
\begin{theo}(\cite{LZ2}) If I is an injective of the category of
$\mathrm{H}^{\ast}V-\mathcal{U}$, then $I \cong \displaystyle \bigoplus_{(L,W,n)
\in \mathcal{L} \times \mathcal{W} \times \mathbb{N}} (E(V,W,n)
\otimes_{\mathbb{F}_{2}} L)^{\oplus_{a_{L,W,n}}}$.
\smallskip\\
Conversely, each
$\mathrm{H}^{\ast}V-A$-module of this form is an $\mathrm{H}^{\ast}V-\mathcal{U}$-injective.
\end{theo}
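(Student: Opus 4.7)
The plan is to extend the classification of $\mathcal{U}$-injectives (Theorem 2.4.1) to the enriched setting $\mathrm{H}^{\ast}V-\mathcal{U}$, exploiting induction--restriction adjunctions along the quotient maps $V\twoheadrightarrow V/W$ and detecting indecomposables via the $\mathrm{Fix}$ functor mentioned in the introduction. I would split the argument into two parts: first the ``converse'' (each listed module is injective), then the main statement (that these exhaust the injectives).

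For the converse, I would check that $E(V,W,n)$ is $\mathrm{H}^{\ast}V-\mathcal{U}$-injective by combining the universal property of $\mathrm{J}_{V/W}(n)$ with the freeness of $\mathrm{H}^{\ast}V$ over $\mathrm{H}^{\ast}(V/W)$. The resulting expression
\[
\mathrm{Hom}_{\mathrm{H}^{\ast}V-\mathcal{U}}(M, E(V,W,n)) \;\cong\; \mathrm{Hom}_{\mathbb{F}_2}\!\left( (\mathbb{F}_2\otimes_{\mathrm{H}^{\ast}(V/W)} M)^{n},\, \mathbb{F}_2 \right)
\]
is exact in $M$ because the tensor functor involved is exact on the relevant subcategory and the outer layer is an $\mathbb{F}_2$-vector-space dual. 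Tensoring with an indecomposable $L\in\mathcal{L}$ preserves injectivity since $L$ is a $\mathcal{U}$-injective retract of a power of $\mathrm{H}^{\ast}(\mathbb{Z}/2\mathbb{Z})$ and $-\otimes L$ admits an exact adjoint. Direct sums of injectives remain injective (the category is locally Noetherian in the relevant graded sense), so every module of the listed form is $\mathrm{H}^{\ast}V-\mathcal{U}$-injective.

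For the main direction, I would show that the family $\{E(V,W,n)\otimes L\}$ cogenerates $\mathrm{H}^{\ast}V-\mathcal{U}$: given any $M$, one embeds $\overline{M}$ into a $\mathcal{U}$-injective via Theorem 2.4.1 and lifts to an $\mathrm{H}^{\ast}V-\mathcal{U}$-embedding of $M$ into a product of the listed form, using the induction--restriction adjunctions together with the nilpotence control of Proposition 2.1.1. Applied to an injective $I$, this embedding splits, so $I$ is a direct summand of a module of the listed form. The final step is a Krull--Schmidt uniqueness statement: the $E(V,W,n)\otimes L$ are pairwise non-isomorphic indecomposables, and the multiplicities $a_{L,W,n}$ are intrinsic invariants of $I$.

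The hard part will be this last step. The natural device is Lannes' $\mathrm{Fix}_{W'}$ functor for subgroups $W'\leq V$, which is exact and monoidal: applied to $E(V,W,n)\otimes L$ it distinguishes the triple $(L,W,n)$ by which subgroup trivializes the module and by the residual $\mathcal{U}$-module it returns, which can be matched against the classification in Theorem 2.4.1. Recovering the subgroup component $W$ amounts to identifying the minimal subgroup of $V$ through which each indecomposable summand's $\mathrm{H}^{\ast}V$-action is pulled back; the nilpotence of $\mathrm{Tor}_{1}^{\mathrm{H}^{\ast}V}$ afforded by Proposition 2.1.1 ensures that the reduction $I\mapsto\overline{I}$ does not conflate indecomposables across different $W$'s. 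Assembling these pointwise-determined invariants into a global direct-sum decomposition of $I$ then concludes the proof.
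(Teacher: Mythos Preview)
The paper does not prove this theorem. It appears in Section~2.5 as a cited background result from \cite{LZ2} (Lannes--Zarati), stated without proof alongside the definitions of $\mathrm{J}_{V}(n)$ and $E(V,W,n)$. There is therefore nothing in the present paper to compare your proposal against; the authors simply invoke the classification and move on.

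As for the proposal itself: the overall architecture (injectivity of the building blocks via adjunction, cogeneration, splitting, then Krull--Schmidt) is the standard shape of such arguments and is broadly in line with how \cite{LZ2} proceeds. But several steps as written are not yet arguments. The displayed isomorphism for $\mathrm{Hom}_{\mathrm{H}^{\ast}V-\mathcal{U}}(M, E(V,W,n))$ requires care: induction $\mathrm{H}^{\ast}V\otimes_{\mathrm{H}^{\ast}(V/W)}(-)$ is \emph{left} adjoint to restriction, not right adjoint, so you do not automatically get a Hom-description of maps \emph{into} $E(V,W,n)$ from the universal property of $\mathrm{J}_{V/W}(n)$; one needs instead that $\mathrm{H}^{\ast}V$ is finite free over $\mathrm{H}^{\ast}(V/W)$ so that induction and coinduction coincide up to a shift, or an alternative route via $\mathrm{Fix}$. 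Likewise, the claim that ``the tensor functor involved is exact on the relevant subcategory'' is exactly the nontrivial point and cannot be asserted without justification. Finally, your Krull--Schmidt step leans on $\mathrm{Fix}_{W'}$ to separate the parameter $W$, but you have not said why the endomorphism rings of the $E(V,W,n)\otimes L$ are local, which is what Krull--Schmidt actually needs. If you want a self-contained proof, these are the places to fill in; otherwise, citing \cite{LZ2} as the paper does is the appropriate move.
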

Clearly $\mathrm{H}^{\ast}V$ is an $\mathrm{H}^{\ast}V-\mathcal{U}$-injective.
\subsection{Algebraic Smith theory}
\subsubsection{The functors $\mathrm{F}ix$}
We introduce the functors $\mathrm{F}ix$ (\cite{L1}, \cite{LZ2}). We denote by
$$\mathrm{F}ix_{V}: \mathrm{H}^{\ast}V-\mathcal{U} \rightarrow \mathcal{U}$$
the left adjoint of the functor
$$\mathrm{H}^{\ast}V \otimes-: \mathcal{U} \rightarrow
\mathrm{H}^{\ast}V-\mathcal{U}$$
We have the functorial bijection:
\begin{center}
$\displaystyle \mathrm{H}om_{\mathrm{H}^{\ast}V-\mathcal{U}}(N,\;H^{\ast}V \otimes P) \cong
\mathrm{H}om_{\mathcal{U}}(\mathrm{F}ix_{V}N,\;P)$
\end{center}
for every unstable $\mathrm{H}^{\ast}V-A$-module $N$ and every unstable $A$-module $P$.\\
The functor $\mathrm{F}ix_{V}$ has the following properties.\\
2.6.1.1. The functor $\mathrm{F}ix_{V}$ is an exact functor.\\
2.6.1.2. Let $N$ be an unstable $ \mathrm{H}^{\ast}V-A$-module and $\mathcal{E}(N)$ be its
injective hull. Then, the module $\mathrm{F}ix_{V}\mathcal{E}(N)$
is the injective hull of $\mathrm{F}ix_{V}N$.
\subsubsection{} Let $N$ be an unstable $\mathrm{H}^{\ast}V-A$-module, we denote by
$$\eta_{_{V}}:\; N \rightarrow \mathrm{H}^{\ast}V \otimes \mathrm{F}ix_{_{V}}N$$
the adjoint of the identity of
$Fix_{_{V}}N$.
We denote by $\mathrm{c}_{V}=\displaystyle \prod_{u \in \mathrm{H}^{1}V -\{0\}} u$ the top Dickson invariant, we have
the following result (see \cite{LZ2} corollary 2.3).
\begin{propo} Let $N$ be an unstable $\mathrm{H}^{\ast}V-A$-module. The
localization of the map $\eta_{_{V}}$
$$ \eta_{_{V}} [\mathrm{c}_{V}^{-1}]: N[\mathrm{c}_{V}^{-1}] \rightarrow \mathrm{H}^{\ast}V
[\mathrm{c}_{v}^{-1}] \otimes \mathrm{F}ix_{V}N $$
is an injection.
\end{propo}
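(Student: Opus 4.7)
The plan proceeds in three steps: identify $\ker \eta_{V}$ abstractly, pass to an injective hull, and apply the Lannes--Zarati classification of $\mathrm{H}^{\ast}V-\mathcal{U}$-injectives.

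First, let $K=\ker \eta_{V}$. Applying the exact functor $\mathrm{F}ix_{V}$ (property 2.6.1.1) to the inclusion $K \hookrightarrow N$ gives an injection $\mathrm{F}ix_{V} K \hookrightarrow \mathrm{F}ix_{V} N$. The triangle identity for the adjunction $\mathrm{F}ix_{V} \dashv (\mathrm{H}^{\ast}V \otimes -)$ forces $\mathrm{F}ix_{V}(\eta_{V})$ to be split injective, with retraction the counit at $\mathrm{F}ix_{V} N$. Since the composite $K \hookrightarrow N \xrightarrow{\eta_{V}} \mathrm{H}^{\ast}V \otimes \mathrm{F}ix_{V} N$ vanishes by definition of $K$, applying $\mathrm{F}ix_{V}$ and using the injectivity of $\mathrm{F}ix_{V}(\eta_{V})$ forces $\mathrm{F}ix_{V} K=0$.

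Next, I would embed $K$ into its injective hull $\mathcal{E}(K)$ in $\mathrm{H}^{\ast}V-\mathcal{U}$. By property 2.6.1.2, $\mathrm{F}ix_{V} \mathcal{E}(K)$ is the injective hull of $\mathrm{F}ix_{V} K = 0$ and hence vanishes. Since localization at $\mathrm{c}_{V}$ is exact, the inclusion $K[\mathrm{c}_{V}^{-1}] \hookrightarrow \mathcal{E}(K)[\mathrm{c}_{V}^{-1}]$ remains injective, so it suffices to show that any $\mathrm{H}^{\ast}V-\mathcal{U}$-injective $I$ with $\mathrm{F}ix_{V} I = 0$ satisfies $I[\mathrm{c}_{V}^{-1}]=0$.

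Finally, the Lannes--Zarati classification (section 2.5) gives $I \cong \bigoplus_{i} L_{i} \otimes E(V, W_{i}, n_{i})$, with each summand having $\mathrm{F}ix_{V}=0$. Using the induction presentation $E(V, W, n) = \mathrm{H}^{\ast}V \otimes_{\mathrm{H}^{\ast}(V/W)} \mathrm{J}_{V/W}(n)$ together with the defining adjunction of $\mathrm{F}ix_{V}$, one computes $\mathrm{F}ix_{V}$ on each summand; the constraint $\mathrm{F}ix_{V}=0$ then forces $\mathrm{c}_{V}$ to act nilpotently on the summand, since $\mathrm{c}_{V}$ contains among its factors all nonzero linear forms pulled back from $\mathrm{H}^{1}(V/W)$, and these act through the $\mathrm{H}^{\ast}(V/W)$-module structure of $\mathrm{J}_{V/W}(n)$ in the relevant cases. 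The main obstacle is precisely this last computation: explicitly identifying which summands $L_{i} \otimes E(V, W_{i}, n_{i})$ satisfy $\mathrm{F}ix_{V}=0$ and verifying $\mathrm{c}_{V}$-nilpotence on each. This requires the explicit structure of the Brown--Gitler-type modules $\mathrm{J}_{V/W}(n)$ and a careful analysis of the factorization of the Dickson invariant $\mathrm{c}_{V}$ over the subgroup lattice of $V$.
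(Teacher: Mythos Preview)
The paper does not actually prove this proposition; it merely cites \cite{LZ2}, Corollary 2.3. So there is no in-paper argument to compare against, and your outline must be judged on its own merits.

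Your first two steps are sound. The identification $\mathrm{Fix}_{V}K=0$ via the triangle identity is correct, and passing to the injective hull $\mathcal{E}(K)$ using property 2.6.1.2 is legitimate (that property is stated prior to, and independently of, the proposition in question). Both $\mathrm{Fix}_{V}$ (a left adjoint) and localization at $\mathrm{c}_{V}$ commute with the possibly infinite direct sums appearing in the Lannes--Zarati decomposition, so reducing to indecomposable summands $L\otimes E(V,W,n)$ is fine.

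The gap you flag in step three is real but smaller than you suggest, and it splits cleanly by the value of $W$. For $W\neq V$ one does not need to compute $\mathrm{Fix}_{V}$ at all: choose any nonzero $u\in\mathrm{H}^{1}(V/W)\subset\mathrm{H}^{1}V$; in the tensor product $\mathrm{H}^{\ast}V\otimes_{\mathrm{H}^{\ast}(V/W)}\mathrm{J}_{V/W}(n)$ one may slide $u$ across to act on the finite module $\mathrm{J}_{V/W}(n)$, so $u^{n+1}$ and hence $\mathrm{c}_{V}^{\,n+1}$ annihilate the summand. Thus every summand with $W\neq V$ has trivial $\mathrm{c}_{V}$-localization regardless of $\mathrm{Fix}_{V}$. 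What remains is to exclude summands with $W=V$, i.e.\ $L\otimes\mathrm{H}^{\ast}V\otimes\mathrm{J}(n)$. Here the adjunction gives
\[
\mathrm{Fix}_{V}\bigl(\mathrm{H}^{\ast}V\otimes M\bigr)\;\cong\;T_{V}M,
\]
since $\mathrm{Hom}_{\mathrm{H}^{\ast}V-\mathcal{U}}(\mathrm{H}^{\ast}V\otimes M,\mathrm{H}^{\ast}V\otimes P)=\mathrm{Hom}_{\mathcal{U}}(M,\mathrm{H}^{\ast}V\otimes P)=\mathrm{Hom}_{\mathcal{U}}(T_{V}M,P)$. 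As $M=L\otimes\mathrm{J}(n)$ is nonzero and is a retract of $T_{V}M$, one gets $\mathrm{Fix}_{V}(L\otimes E(V,V,n))\neq 0$, so no such summand can occur in $\mathcal{E}(K)$. With these two observations your argument closes; no delicate analysis of the internal structure of $\mathrm{J}_{V/W}(n)$ or of the Dickson factorization beyond the existence of a single linear factor is needed.
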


This shows in particular, that if $N$ is torsion-free then the map $\eta_{_{V}}$
is an injection.

The proposition 2.6.1 can be reformulated as follows.
\begin{propo} Let $N$ be an unstable $\mathrm{H}^{\ast}V-A$-module. If
$N$ is torsion-free then its injective hull in
$\mathrm{H}^{\ast}V-\mathcal{U}$ is free as an $H^{*}V$-module and is isomorphic
to $\displaystyle
\bigoplus_{(L,n) \in \mathcal{L} \times \mathbb{N}}
(\mathrm{H}^{\ast}V \otimes \mathrm{J}(n))\otimes L$
\end{propo}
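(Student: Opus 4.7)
\emph{Proof plan.} The strategy is to embed $N$ into an explicit $\mathrm{H}^{\ast}V-\mathcal{U}$-injective of the claimed form and extract $\mathcal{E}(N)$ as a direct summand. Since $N$ is torsion-free, the remark following Proposition 2.6.1 yields an $\mathrm{H}^{\ast}V-A$-linear injection $\eta_{V}\colon N\hookrightarrow \mathrm{H}^{\ast}V\otimes\mathrm{F}ix_{V}N$. The unstable $A$-module $\mathrm{F}ix_{V}N$ embeds into its $\mathcal{U}$-injective hull, which by Theorem 2.4.1 is isomorphic to $\bigoplus_{(L,n)\in\mathcal{L}\times\mathbb{N}}(L\otimes \mathrm{J}(n))^{\oplus a_{L,n}}$ for some cardinals $a_{L,n}$. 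Applying the exact functor $\mathrm{H}^{\ast}V\otimes-$ and composing with $\eta_{V}$ produces an injection
$$N\;\hookrightarrow\; I\;:=\;\bigoplus_{(L,n)}\bigl((\mathrm{H}^{\ast}V\otimes \mathrm{J}(n))\otimes L\bigr)^{\oplus a_{L,n}}.$$

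The module $I$ is $\mathrm{H}^{\ast}V-\mathcal{U}$-injective, since each summand $(\mathrm{H}^{\ast}V\otimes \mathrm{J}(n))\otimes L$ is precisely $E(V,V,n)\otimes L$ in the Lannes--Zarati classification (Theorem 2.5.1). By the universal property of the injective hull, $\mathcal{E}(N)$ embeds into $I$ and splits off as a direct summand.

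To finish, I would invoke the uniqueness inherent in the classification of $\mathrm{H}^{\ast}V-\mathcal{U}$-injectives: the indecomposable summands of $\mathcal{E}(N)$ are necessarily of the form $E(V,W,n)\otimes L$, and by Krull--Schmidt they must occur among the indecomposable summands of $I$, all of which are $E(V,V,n)\otimes L$. Hence $\mathcal{E}(N)\cong \bigoplus_{(L,n)}((\mathrm{H}^{\ast}V\otimes \mathrm{J}(n))\otimes L)^{\oplus b_{L,n}}$ for certain $b_{L,n}\leq a_{L,n}$, from which both assertions of the proposition follow. The main obstacle I anticipate is precisely this Krull--Schmidt-style step; an alternative argument writes any splitting $I\cong \mathcal{E}(N)\oplus C$, uses property 2.6.1.2 together with the adjunction between $\mathrm{F}ix_{V}$ and $\mathrm{H}^{\ast}V\otimes-$ to force $\mathrm{F}ix_{V}C=0$, and then combines the $\mathrm{H}^{\ast}V$-projectivity of $C$ (as a summand of the free module $I$) with Proposition 2.6.1 applied to $C$ to conclude $C=0$.
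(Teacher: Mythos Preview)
Your main line of argument is correct and runs parallel to the paper's proof: both start from the injection $\eta_{V}\colon N\hookrightarrow \mathrm{H}^{\ast}V\otimes\mathrm{Fix}_{V}N$ and pass to an $\mathrm{H}^{\ast}V-\mathcal{U}$-injective of the form $\mathrm{H}^{\ast}V\otimes I$. The paper then simply cites 2.6.1.1 and 2.6.1.2 to conclude, without spelling out the identification of the summand $\mathcal{E}(N)$; your Krull--Schmidt step makes this explicit, and it is legitimate here since the uniqueness of the decomposition into the indecomposables $E(V,W,n)\otimes L$ is part of the Lannes--Zarati classification you quote as Theorem~2.5.1. So the primary route is sound and essentially the paper's argument, just more detailed at the end.

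Your alternative argument, however, does not work. With $I'=\mathcal{E}_{\mathcal{U}}(\mathrm{Fix}_{V}N)$ and a splitting $\mathrm{H}^{\ast}V\otimes I'\cong \mathcal{E}(N)\oplus C$, applying $\mathrm{Fix}_{V}$ gives
\[
\mathrm{Fix}_{V}(\mathrm{H}^{\ast}V\otimes I')\;\cong\;\mathrm{Fix}_{V}\mathcal{E}(N)\oplus \mathrm{Fix}_{V}C\;\cong\;I'\oplus \mathrm{Fix}_{V}C.
\]
But the adjunction identifies $\mathrm{Fix}_{V}(\mathrm{H}^{\ast}V\otimes I')$ with $T_{V}I'$, not with $I'$; in general $T_{V}I'\supsetneq I'$, so one cannot conclude $\mathrm{Fix}_{V}C=0$. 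Concretely, for $V=\mathbb{Z}/2$ and $N=\mathrm{H}\otimes\mathrm{H}$ (an $\mathrm{H}^{\ast}V-\mathcal{U}$-injective, so $\mathcal{E}(N)=N$), one has $\mathrm{Fix}_{V}N=T_{\mathbb{Z}/2}\mathrm{H}\cong \mathrm{H}\oplus\mathrm{H}=I'$, hence $\mathrm{H}^{\ast}V\otimes I'\cong(\mathrm{H}\otimes\mathrm{H})^{2}$ and $C\cong \mathrm{H}\otimes\mathrm{H}\neq 0$. So drop the alternative and keep the Krull--Schmidt conclusion.
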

\begin{proof} Since the module is torsion-free then the
map $\eta_{_{V}}:  \; N \rightarrow \mathrm{H}^{\ast}V \otimes \mathrm{F}ix_{_{V}}N$
adjoint of the identity of $\mathrm{F}ix_{_{V}}N$ is an injection. So $N$ is a
sub-$\mathrm{H}^{\ast}V-A$-module of $\mathrm{H}^{\ast}V \otimes \mathrm{F}ix_{_{V}}N$. By 2.6.1.1
and 2.6.1.2, we have that
the injective hull of $N$ is isomorphic
to $\mathrm{H}^{\ast}V \otimes I$, where $I$ is an $\mathcal{U}$-injective.
\end{proof}
\begin{remk} { \em As a consequence of proposition 2.6.2, we have that
if $E$ is an unstable $\mathrm{H}^{\ast}V-A$-module which is
free as an $\mathrm{H}^{\ast}V$-module then
its injective hull (in the category $\mathrm{H}^{\ast}V-\mathcal{U}$) is also free as
an $\mathrm{H}^{\ast}V$-module.}
\end{remk}
\begin{propo} \cite{LZ2}. Let $N$ be an unstable $\mathrm{H}^{\ast}V-A$-module which is of finite
type as an $\mathrm{H}^{\ast}V$-module. The
localization of the map $\eta_{_{V}}$
$$ \eta_{_{V}} [\mathrm{c}_{V}^{-1}]: N[\mathrm{c}_{V}^{-1}] \rightarrow \mathrm{H}^{\ast}V
[\mathrm{c}_{V}^{-1}] \otimes \mathrm{F}ix_{V}N $$
is an isomorphism.
\end{propo}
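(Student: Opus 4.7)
Injectivity of $\eta_{V}[\mathrm{c}_{V}^{-1}]$ is already supplied by Proposition 2.6.1 without any finite-type hypothesis, so the content of this proposition is surjectivity; the finite-type assumption on $N$ enters only there. The plan is to reduce the statement, via a short injective resolution, to checking it on the building blocks $E(V,W,n) \otimes L$ that classify $\mathrm{H}^{\ast}V-\mathcal{U}$-injectives in Theorem 2.5.2.

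Since $\mathrm{H}^{\ast}V$ is Noetherian and $N$ is finitely generated over it, I would choose a finite-type $\mathrm{H}^{\ast}V-\mathcal{U}$-injective $I_{0}$ receiving an embedding from $N$ and iterate once to produce an exact sequence
$$0 \to N \to I_{0} \to I_{1}$$
in $\mathrm{H}^{\ast}V-\mathcal{U}$ with both $I_{0}, I_{1}$ of finite type. Applying the exact functor $\mathrm{F}ix_{V}$ (property 2.6.1.1), tensoring on the left with the flat $\mathrm{H}^{\ast}V$-algebra $\mathrm{H}^{\ast}V[\mathrm{c}_{V}^{-1}]$, and invoking the naturality of $\eta_{V}$, yields a commutative diagram with exact rows comparing $N[\mathrm{c}_{V}^{-1}]$, $I_{0}[\mathrm{c}_{V}^{-1}]$, $I_{1}[\mathrm{c}_{V}^{-1}]$ to the corresponding $\mathrm{H}^{\ast}V[\mathrm{c}_{V}^{-1}] \otimes \mathrm{F}ix_{V}(-)$. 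Since $\eta_{V}[\mathrm{c}_{V}^{-1}]$ is always injective, a standard four-lemma chase reduces the surjectivity of $\eta_{V}[\mathrm{c}_{V}^{-1}]$ for $N$ to the same property for $I_{0}$ and $I_{1}$; additivity then further reduces the task to a single summand $E(V,W,n)\otimes L$ with $(W,n) \in \mathcal{W} \times \mathbb{N}$ and $L \in \mathcal{L}$.

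Verifying the proposition on $E(V,W,n) \otimes L$ is the technical heart of the argument. Starting from $E(V,W,n) = \mathrm{H}^{\ast}V \otimes_{\mathrm{H}^{\ast}V/W} \mathrm{J}_{V/W}(n)$, I would identify $\mathrm{F}ix_{V} E(V,W,n)$ by composing the adjunction defining $\mathrm{F}ix_{V}$ with the universal property of $\mathrm{J}_{V/W}(n)$, obtaining an explicit Brown--Gitler factor. The key observation is that after inverting $\mathrm{c}_{V}$, every linear form in $\mathrm{H}^{1}V$ whose image in $\mathrm{H}^{1}(V/W)$ vanishes becomes a unit; this causes the relative tensor product $\mathrm{H}^{\ast}V[\mathrm{c}_{V}^{-1}] \otimes_{\mathrm{H}^{\ast}V/W}(-)$ to degenerate into a simple extension of scalars matching the structure of $\mathrm{H}^{\ast}V[\mathrm{c}_{V}^{-1}] \otimes \mathrm{F}ix_{V} E(V,W,n)$. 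Tensoring with $L \in \mathcal{L}$ is harmless, because the $\mathrm{H}^{\ast}V$-action on $L$ is trivial and $\mathrm{F}ix_{V}(- \otimes L) = \mathrm{F}ix_{V}(-) \otimes L$.

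The main obstacle I anticipate is the explicit identification of $\mathrm{F}ix_{V} E(V,W,n)$ together with the verification that $\mathrm{c}_{V}$ is precisely the right element to invert: a smaller product of linear forms would leave some of the non-$W$ directions uncleared and prevent surjectivity. The finite-type hypothesis is indispensable in the reduction step, since only then can one stay within the Noetherian framework and invoke the classification of Theorem 2.5.2 term by term.
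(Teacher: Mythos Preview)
The paper does not give its own proof of this proposition: it is simply quoted from \cite{LZ2} and followed immediately by consequences. There is therefore no argument in the paper to compare your proposal against.

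That said, your outline is a sound strategy and is close in spirit to how such statements are proved in \cite{LZ2}: use exactness of $\mathrm{Fix}_{V}$ and flatness of localization to reduce, via a short left-exact sequence of injectives, to the indecomposable summands $E(V,W,n)\otimes L$, and then compute directly on those. Two points would need to be made precise. First, you assert that a finite-type $N$ embeds in a finite-type $\mathrm{H}^{\ast}V-\mathcal{U}$-injective; this is true, but it is not automatic from Noetherianity alone and relies on the classification in Theorem~2.5.2 together with a degree bound on generators. Second, the explicit identification of $\mathrm{Fix}_{V}\,E(V,W,n)$ and the verification that $\eta_{V}[\mathrm{c}_{V}^{-1}]$ is surjective on it is exactly the computation carried out in \cite{LZ2}; your description of why inverting $\mathrm{c}_{V}$ suffices is on the right track but would require the actual calculation to be complete.
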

In particular, the previous result shows that:
\begin{enumerate}
\item If $N$ is free as an $\mathrm{H}^{\ast}V$-module, then the map $\eta_{V}$ is an injection.
\item The isomorphism of the proposition proves that $dim \overline{E}= dim \mathrm{F}ix_{V}E$ where $dim$ is the
total dimension (see \cite{LZ2}).
\end{enumerate}
\section{Some properties of $E$ when $\overline{E}$ is reduced}
In this section we will prove some algebraic results which will be useful
for section 4. In fact, we will analyze the relation between an unstable
$\mathrm{H}^{\ast}V-A$-module $E$ and its (associated) unstable $A$-module
$\overline{E}$.
For this, we will begin by giving some technical results.
\subsection{Technical results}
\begin{lema}
Let $P$ and $Q$ be unstable $\mathrm{H}^{\ast}V-A$-modules, free as
$\mathrm{H}^{\ast}V$-modules
and $f: P \rightarrow Q$ an $\mathrm{H}^{\ast}V-A$-linear map. If the induced map
$ \overline{f}: \overline{P} \rightarrow \overline{Q}$ is an injection
then $f$ is also an injection.
\end{lema}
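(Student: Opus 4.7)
The plan is to show $K := \ker f = 0$ by proving that $K \subseteq T^{n} P$ for every $n \geq 0$, where $T := \widetilde{\mathrm{H}^{*}}V$ is the augmentation ideal. Since $P$ is $\mathrm{H}^{*}V$-free on homogeneous generators of non-negative degree, no nonzero homogeneous element of degree $m$ can lie in $T^{n} P$ once $n > m$: its $\mathrm{H}^{*}V$-coefficient against each basis element $e_{i}$ (of degree $d_{i} \geq 0$) would have to be of degree $m - d_{i} \geq n > m$, which is impossible. Hence $\bigcap_{n} T^{n} P = 0$, and the containment above will force $K = 0$.

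For the induction, the base case $n = 0$ is trivial. The step rests on the standard identification of the associated graded: because $P$ is $\mathrm{H}^{*}V$-free, the canonical map
\[
(T^{n}/T^{n+1}) \otimes_{\mathbb{F}_{2}} \overline{P} \;\longrightarrow\; T^{n} P / T^{n+1} P, \qquad [t] \otimes [p] \;\longmapsto\; [tp],
\]
is an isomorphism (verify on a homogeneous $\mathrm{H}^{*}V$-basis of $P$), and similarly for $Q$. Since $f$ is $\mathrm{H}^{*}V$-linear it respects the $T$-adic filtration, and under these isomorphisms the induced map on the $n$-th graded piece becomes $\mathrm{id}_{T^{n}/T^{n+1}} \otimes \overline{f}$. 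Because $T^{n}/T^{n+1}$ is $\mathbb{F}_{2}$-free and $\overline{f}$ is injective by hypothesis, this associated-graded map is injective.

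Granting this, the inductive step is immediate: if $K \subseteq T^{n} P$ and $x \in K$, then the class $[x] \in T^{n} P / T^{n+1} P$ is sent to $[f(x)] = 0$, so $[x] = 0$ and $x \in T^{n+1} P$. The only non-routine point is the identification of the associated graded, which is precisely where the freeness hypothesis on both $P$ and $Q$ is used; without it the displayed map need not be an isomorphism and $\mathrm{gr}(f)$ need not be injective. Given the freeness, however, the argument is a clean filtration induction combined with the graded Nakayama observation above.
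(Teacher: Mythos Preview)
Your proof is correct and takes a different route from the paper's. The paper factors $f$ through its image, shows that $\mathrm{Im}\,f$ is itself $\mathrm{H}^{*}V$-free (using that it is a submodule of the free module $Q$ with $\overline{\mathrm{Im}\,f}\hookrightarrow\overline{Q}$ injective), deduces $\mathrm{Tor}_{1}^{\mathrm{H}^{*}V}(\mathbb{F}_{2},\mathrm{Im}\,f)=0$, and then from the short exact sequence $0\to N\to P\to\mathrm{Im}\,f\to 0$ concludes $\overline{N}=0$, hence $N=0$. You instead run a direct $T$-adic filtration argument, identifying the associated graded map with $\mathrm{id}\otimes\overline{f}$ and pushing $K$ into $\bigcap_{n}T^{n}P=0$. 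Your approach is more elementary and self-contained---no $\mathrm{Tor}$ computation and no appeal to the auxiliary result that submodules of free modules with injective reduction are free---while the paper's approach has the side benefit of isolating the freeness of $\mathrm{Im}\,f$, which is of independent interest. Both arguments ultimately rest on the graded Nakayama principle.
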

\begin{proof}
Let's denote by $Imf$ the image of $f$, by $\widetilde{f} : P \rightarrow Imf$
the natural surjection and by
$i: Imf \hookrightarrow Q$ the inclusion of $Imf$ in $Q$. Since
$ \overline{f}$ is an injection so the induced map $ \overline{(\widetilde{f})}$
is an isomorphism of unstable $A$-modules and then the induced map
$ \overline{i}$ is an injection. This shows that $ \overline{Imf}$ is the image of
$ \overline{f}$. Since the module $Imf$ is a sub-$\mathrm{H}^{\ast}V$-module of
the $\mathrm{H}^{\ast}V$-free module $Q$ and
$\overline{i}: \overline{Imf} \hookrightarrow \overline{Q}$ is an injection, so
$Imf$ is free as an $\mathrm{H}^{\ast}V$-module.
In particular, we have that $Tor_{1}^{\mathrm{H}^{\ast}V}(\mathbb{F}_{2}, Imf)$=0 (see for example \cite {R}).
Let's denote by $N$ the kernel of the map $\widetilde{f}$, so we have the following short exact
sequence in $\mathrm{H}^{\ast}V-\mathcal{U}$:
$$
    \xymatrix{0 \ar[r] & N \ar[r]^-{} &
P \ar[r]^-{\widetilde{f}} & Imf \ar[r] & 0}\,.
$$
By applying the functor ($ \mathbb{F}_{2} \otimes_{\mathrm{H}^{\ast}V}-$) to the
previous sequence, we prove that $\overline{N}$ is trivial (since the map
$ \overline{(\widetilde{f})}$ is an isomorphism and $Imf$ is free as an $\mathrm{H}^{\ast}V-A$-module).
Hence the module $N$ is trivial and the map $f$ is an injection.

\end{proof}
The converse of this lemma is not true in general, but we have the following result:
\begin{lema}
Let $P$ and $Q$ be unstable $\mathrm{H}^{\ast}V-A$-modules, free as $\mathrm{H}^{\ast}V$-modules
and $f: P \rightarrow Q$ an $\mathrm{H}^{\ast}V-A$-linear map which is an injection. If
$\overline{P}$ is a reduced unstable $A$-module, then the induced map
$ \overline{f}: \overline{P} \rightarrow \overline{Q}$ is an injection.
\end{lema}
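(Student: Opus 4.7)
The plan is to exploit the short exact sequence induced by $f$ and the long exact Tor sequence, then combine Proposition 2.1.1 with the reducedness hypothesis via Lemma 2.2.1. Since $f$ is injective, I form the cokernel $C := Q/f(P)$ inside $\mathrm{H}^{\ast}V-\mathcal{U}$ and obtain the short exact sequence
$$0 \longrightarrow P \xrightarrow{f} Q \longrightarrow C \longrightarrow 0.$$
The functor $\mathbb{F}_{2}\otimes_{\mathrm{H}^{\ast}V}-$ is right exact, so deriving it on the left yields the five-term exact piece
$$\mathrm{Tor}^{\mathrm{H}^{\ast}V}_{1}(\mathbb{F}_{2},C) \longrightarrow \overline{P} \xrightarrow{\overline{f}} \overline{Q} \longrightarrow \overline{C} \longrightarrow 0,$$
and exactness at $\overline{P}$ identifies $\ker\overline{f}$ with the image of the connecting map, i.e.\ with a quotient of $\mathrm{Tor}^{\mathrm{H}^{\ast}V}_{1}(\mathbb{F}_{2},C)$.

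Next, I invoke Proposition 2.1.1, which tells me that $\mathrm{Tor}^{\mathrm{H}^{\ast}V}_{1}(\mathbb{F}_{2},C)$ is a nilpotent unstable $A$-module. A quotient of a nilpotent unstable $A$-module is again nilpotent, so $\ker\overline{f}$ is nilpotent.

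On the other hand, $\ker\overline{f}$ is a sub-$A$-module of $\overline{P}$. Since $\overline{P}$ is reduced by hypothesis and the injectivity of $Sq_{0}$ passes to submodules, $\ker\overline{f}$ is reduced. The remark following Lemma 2.2.1 says that any $A$-linear map from a nilpotent $A$-module to a reduced one is trivial; applied to the connecting map $\mathrm{Tor}^{\mathrm{H}^{\ast}V}_{1}(\mathbb{F}_{2},C)\to\overline{P}$, this forces its image to vanish, hence $\ker\overline{f}=0$.

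There is essentially no substantive obstacle: the whole argument is a bookkeeping exercise combining the Tor long exact sequence with the two external inputs already collected in Section 2 (nilpotence of $\mathrm{Tor}_{1}$ and the incompatibility between nilpotent and reduced modules). The only point that requires a brief justification is the existence of the long exact Tor sequence in $\mathrm{H}^{\ast}V-\mathcal{U}$, which is standard because $\mathrm{Tor}^{\mathrm{H}^{\ast}V}_{\ast}(\mathbb{F}_{2},-)$ is computed on the underlying $\mathrm{H}^{\ast}V$-module and inherits its $A$-action from the sequence.
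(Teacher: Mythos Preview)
Your proof is correct and follows essentially the same route as the paper: form the short exact sequence $0\to P\to Q\to C\to 0$, apply $\mathbb{F}_{2}\otimes_{\mathrm{H}^{\ast}V}-$, and use that $\mathrm{Tor}^{\mathrm{H}^{\ast}V}_{1}(\mathbb{F}_{2},C)$ is nilpotent (Proposition 2.1.1) while $\overline{P}$ is reduced to kill the connecting map. The only cosmetic difference is that the paper uses the freeness of $Q$ to place a $0$ to the left of $\mathrm{Tor}_{1}$ in the derived sequence, whereas you work directly with the image of the connecting map; either way the conclusion is immediate.
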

\begin{proof}
We denote by $C$ the quotient of $Q$ by $P$, we have the following short exact
sequence in $\mathrm{H}^{\ast}V-\mathcal{U}$:
$$
    \xymatrix{0 \ar[r] & P \ar[r]^-{f} &
Q \ar[r]^-{} & C \ar[r] & 0}\,.
$$
By applying the functor ($ \mathbb{F}_{2} \otimes_{\mathrm{H}^{\ast}V}-$) to the
previous sequence, we obtain an exact sequence in $ \mathcal{U}$:
$$
    \xymatrix{0 \ar[r] &  Tor^{\mathrm{H}^{\ast}V}_{1}(\mathbb{F}_{2}, C) \ar[r] & \overline{P}
     \ar[r]^-{\overline{f}} & \overline{Q} \ar[r]^-{} & \overline{C} \ar[r] & 0}\,.
$$
Since $\overline{P}$ is reduced as unstable $A$-module and
$Tor_{1}^{{\mathrm{H}^{\ast}V}}(\mathbb{F}_{2}, C)$ is nilpotent (see proposition
2.1.1), then the map $ \overline{f}$ is  an injection.

\end{proof}
\subsection{Statement of some properties of $E$ when $\overline{E}$ is
reduced }
The first result of this paragraph concerns the relation between the injective
hull of $E$ and the induced module $ \overline{E}$.
\begin{theo} Let $E$ be an unstable $\mathrm{H}^{\ast}V-A$-module which is free as an
$H^{*}V$-module and let
$\mathcal{E}(E)$ be its injective hull (in the category
$\mathrm{H}^{\ast}V-\mathcal{U}$). We suppose that $\overline{E}$ is reduced and let
$I$ be its injective hull in the category $\mathcal{U}$.\\
Then $\mathcal{E}(E)$ is isomorphic, as an unstable $\mathrm{H}^{*}V-A$-module,
to $\mathrm{H}^{*}V \otimes I$.
\end{theo}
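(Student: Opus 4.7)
The plan is to identify $\mathcal{E}(E)$ via the classification of $\mathrm{H}^{\ast}V-\mathcal{U}$-injectives and then match its ``unstable $A$-module part'' with $I$. First, by the remark following Proposition 2.6.2, the injective hull $\mathcal{E}(E)$ is $\mathrm{H}^{\ast}V$-free since $E$ is. Inspecting the classification in Theorem 2.5.1, the summand $E(V,W,n) \otimes L = (\mathrm{H}^{\ast}V \otimes_{\mathrm{H}^{\ast}V/W} \mathrm{J}_{V/W}(n)) \otimes L$ is $\mathrm{H}^{\ast}V$-free only when $W = V$, in which case it reduces to $\mathrm{H}^{\ast}V \otimes \mathrm{J}(n) \otimes L$. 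Consequently one obtains an isomorphism $\mathcal{E}(E) \cong \mathrm{H}^{\ast}V \otimes K$ where $K := \bigoplus_{L,n} (\mathrm{J}(n) \otimes L)^{\oplus a_{L,V,n}}$ is a $\mathcal{U}$-injective by Theorem 2.4.1.

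Second, I would produce an injection $\overline{E} \hookrightarrow K$ by applying Lemma 3.1.2 to the inclusion $E \hookrightarrow \mathcal{E}(E) = \mathrm{H}^{\ast}V \otimes K$: both modules are $\mathrm{H}^{\ast}V$-free and $\overline{E}$ is reduced, so the map $\overline{E} \to \overline{\mathcal{E}(E)} = K$ induced by reducing modulo $\widetilde{\mathrm{H}^{\ast}}V$ is injective. It then remains to show $K \cong I$. Since $K$ is already a $\mathcal{U}$-injective containing $\overline{E}$, this amounts to verifying that $\overline{E} \hookrightarrow K$ is an essential extension in the category $\mathcal{U}$, for then its target is, by definition, the injective hull of $\overline{E}$.

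For this essentiality, my plan is to pass through the functor $\mathrm{F}ix_{V}$. By 2.6.1.1 the inclusion $E \hookrightarrow \mathcal{E}(E)$ yields an inclusion $\mathrm{F}ix_{V} E \hookrightarrow \mathrm{F}ix_{V} \mathcal{E}(E)$, and by 2.6.1.2 this embeds $\mathrm{F}ix_{V} E$ essentially into its $\mathcal{U}$-injective hull. The main obstacle is to identify the two sides of this essential inclusion with $\overline{E}$ and $K$, respectively. For $\mathrm{F}ix_{V}(\mathrm{H}^{\ast}V \otimes K) \cong K$ one expects the counit of the adjunction to be an isomorphism, checked summand-by-summand via the classification of $\mathcal{U}$-injectives in Theorem 2.4.1. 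The delicate point is the identification $\mathrm{F}ix_{V} E \cong \overline{E}$; I would establish it by first noting that $\mathrm{H}^{\ast}V$-freeness gives $E \cong \mathrm{H}^{\ast}V \otimes \overline{E}$ as $\mathrm{H}^{\ast}V$-modules, hence $E[\mathrm{c}_V^{-1}]\cong \mathrm{H}^{\ast}V[\mathrm{c}_V^{-1}]\otimes\overline{E}$, which by Proposition 2.6.4 matches $\mathrm{H}^{\ast}V[\mathrm{c}_V^{-1}]\otimes \mathrm{F}ix_{V}E$; the reducedness of $\overline{E}$ should then be used to rule out any nilpotent discrepancy that could appear before localization, yielding the desired $A$-linear isomorphism. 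Granting these two identifications, the essential inclusion $\mathrm{F}ix_{V} E \hookrightarrow \mathrm{F}ix_{V}\mathcal{E}(E)$ becomes the essential inclusion $\overline{E} \hookrightarrow K$ into a $\mathcal{U}$-injective, forcing $K \cong I$ and concluding the proof.
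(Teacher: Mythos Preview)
Your first two steps coincide with the paper's proof: write $\mathcal{E}(E)\cong \mathrm{H}^{*}V\otimes K$ for a $\mathcal{U}$-injective $K$ (Proposition~2.6.2), and use Lemma~3.1.2 to obtain an injection $\overline{i}:\overline{E}\hookrightarrow K$. The divergence is in the essentiality argument, and this is where your proposal has a genuine gap.

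The paper does \emph{not} pass through $\mathrm{Fix}_V$; it verifies essentiality of $\overline{E}\hookrightarrow K$ directly via Lemma~3.1.1. Namely, if $P\subset K$ is a sub-$A$-module with $(\overline{i})^{-1}(P)=0$, then the composite $\overline{E}\xrightarrow{\overline{i}} K\to K/P$ is injective; Lemma~3.1.1 then lifts this to an injection $E\to \mathrm{H}^{*}V\otimes(K/P)$, so $i^{-1}(\mathrm{H}^{*}V\otimes P)=0$. Essentiality of $E\hookrightarrow \mathrm{H}^{*}V\otimes K$ forces $\mathrm{H}^{*}V\otimes P=0$, hence $P=0$. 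This uses only Lemmas~3.1.1 and 3.1.2 and no finiteness hypothesis.

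Your route hinges on the identification $\mathrm{Fix}_V E\cong\overline{E}$, and the argument you sketch for it does not work as stated. The isomorphism $E\cong \mathrm{H}^{*}V\otimes\overline{E}$ coming from $\mathrm{H}^{*}V$-freeness is only $\mathrm{H}^{*}V$-linear, not $A$-linear; after localization it gives $E[\mathrm{c}_V^{-1}]\cong \mathrm{H}^{*}V[\mathrm{c}_V^{-1}]\otimes\overline{E}$ merely as graded $\mathrm{H}^{*}V[\mathrm{c}_V^{-1}]$-modules. Comparing with Proposition~2.6.4 therefore yields at best an equality of Poincar\'e series, not an $A$-isomorphism, and the phrase ``reducedness rules out nilpotent discrepancy'' does not name a mechanism here (reducedness of $\overline{E}$ controls submodules of $\overline{E}$, not the cokernel of $\overline{E}\to\mathrm{Fix}_V E$). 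One can salvage the step as follows: Lemma~3.1.2 applied to $\eta_V$ gives an $A$-linear injection $\overline{\eta}_V:\overline{E}\hookrightarrow\mathrm{Fix}_V E$, and the Poincar\'e-series equality then forces it to be an isomorphism; but that equality comes from Proposition~2.6.4, which requires $E$ to be of finite type over $\mathrm{H}^{*}V$, a hypothesis absent from the theorem. In short, your approach can be repaired under an extra finiteness assumption, whereas the paper's use of Lemma~3.1.1 gives the result in full generality and with less machinery.
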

\begin{proof} Since $E$ is free as an $\mathrm{H}^{*}V$-module, then
$ \mathcal{E}(E)$ is isomorphic,
in the category $\mathrm{H}^{*}V-\mathcal{U}$, to $\mathrm{H}^{*}V \otimes J$,
where $J$ is an $ \mathcal{U}$-injective
(see proposition 2.6.2).\\
Let's denote by $i$ the inclusion of $E$ in
$\mathcal{E}(E)$, we have, by lemma 3.1.2, that the induced map $ \overline{i}$ is
 an injection. We will
prove, by
using the definition, that $J$ is the
injective hull of $\overline{E}$, in the category $\mathcal{U}$.
Let $P$ be a sub-$A$-module of
$J$ such that the $A$-module
$(\overline{i})^{-1}(P)$ is trivial, we have to show that the unstable
$A$-module $P$ is trivial.\\
Since $(\overline{i})^{-1}(P)$ is trivial then the composition:
$\pi \circ \overline{i}: \xymatrix{ \overline{E} \ar[r]^-{ \overline{i}} &
J \ar[r]^-{ \pi} & J/P }$
is  an injection. By lemma 3.1.1, the following composition\\
$\xymatrix{ E \ar[r]^-{ i} &
\mathrm{H}^{*}V \otimes J \ar[r]^-{} & \mathrm{H}^{*}V \otimes (J/P) }$
is  an injection, which proves that the unstable $\mathrm{H}^{*}V-A$-module
$i^{-1}(\mathrm{H}^{*}V \otimes P)$ is trivial.
Since $\mathrm{H}^{*}V \otimes J$ is the injective hull of $E$ so the unstable
$\mathrm{H}^{*}V-A$-module $\mathrm{H}^{*}V \otimes P$ is trivial.
\end{proof}
\begin{coro}
Let $E$ be an unstable $\mathrm{H}^{\ast}V-A$-module such that:
\begin{enumerate}
    \item  $E$ is free as an $\mathrm{H}^{*}V$-module.
    \item  $\overline{E}$ is reduced as unstable $A$-module.
\end{enumerate}
Then $E$ is reduced as unstable $A$-module.
\end{coro}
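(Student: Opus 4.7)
The plan is to exploit Theorem 3.2.1 directly: it identifies the injective hull of $E$ in $\mathrm{H}^{\ast}V-\mathcal{U}$ as $\mathrm{H}^{\ast}V\otimes I$, where $I$ is the injective hull of $\overline{E}$ in $\mathcal{U}$. Since $E$ embeds in its injective hull, it suffices to show that $\mathrm{H}^{\ast}V\otimes I$ is reduced as an unstable $A$-module; then $E$, being a sub-$A$-module, will inherit reducedness (a sub-$A$-module of a reduced module is reduced, directly from the definition or from Lemma 2.2.1).

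The first step will be to show that $I$ itself is reduced. The injective hull is an essential extension, so any nonzero nilpotent sub-$A$-module $N\subset I$ would have a nonzero intersection with $\overline{E}$; this intersection would be a nonzero nilpotent sub-$A$-module of $\overline{E}$, contradicting reducedness of $\overline{E}$ via Lemma 2.2.1. Alternatively, invoking the classification of $\mathcal{U}$-injectives in Theorem 2.4.1, one writes $I\cong\bigoplus_{(L,n)}(L\otimes \mathrm{J}(n))^{\oplus a_{L,n}}$; since $\mathrm{J}(n)$ is finite and $Sq_{0}$ raises degree, $\mathrm{J}(n)$ is nilpotent for $n\geq 1$, and hence so is $L\otimes \mathrm{J}(n)$. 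Reducedness of $I$ then forces $a_{L,n}=0$ for $n\geq 1$, so $I\cong\bigoplus L$ with $L\in\mathcal{L}$.

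The second step will be to show that $\mathrm{H}^{\ast}V\otimes I$ is then reduced. Each $L\in\mathcal{L}$ is, by definition, a direct summand of $\mathrm{H}^{\ast}(\mathbb{Z}/2)^{m}$ for some $m$, so $\mathrm{H}^{\ast}V\otimes L$ is a direct summand of $\mathrm{H}^{\ast}V\otimes\mathrm{H}^{\ast}(\mathbb{Z}/2)^{m}\cong \mathrm{H}^{\ast}(V\oplus(\mathbb{Z}/2)^{m})$. The latter is a polynomial algebra, hence reduced as an unstable $A$-module, and reducedness passes to $\mathcal{U}$-direct summands (any nilpotent submodule of a summand sits in the ambient module). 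Taking direct sums preserves reducedness as well, so $\mathrm{H}^{\ast}V\otimes I=\bigoplus \mathrm{H}^{\ast}V\otimes L$ is reduced, and the result follows.

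There is no real obstacle here since Theorem 3.2.1 does the heavy lifting; the only two points requiring a line of justification each are the essentiality argument identifying $I$ as reduced, and the observation that tensoring the reduced indecomposable injectives $L\in\mathcal{L}$ with $\mathrm{H}^{\ast}V$ preserves reducedness via the polynomial-algebra description. Both are standard once the classification of $\mathcal{U}$-injectives from Theorem 2.4.1 has been invoked.
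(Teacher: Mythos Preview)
Your proposal is correct and follows essentially the same line as the paper: invoke Theorem~3.2.1 to identify the $\mathrm{H}^{\ast}V-\mathcal{U}$-injective hull of $E$ as $\mathrm{H}^{\ast}V\otimes I$ with $I$ the $\mathcal{U}$-injective hull of $\overline{E}$, then observe that $I$ is reduced (since $\overline{E}$ is) and hence so is $\mathrm{H}^{\ast}V\otimes I$, which forces $E$ to be reduced. The paper simply asserts these two facts without justification; your essentiality argument for the first and your classification/polynomial-algebra argument for the second are exactly the standard verifications one would supply.
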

\begin{proof} We have, by theorem 3.2.1, that the injective hull of $E$ is
$\mathrm{H}^{*}V \otimes I$, where
$I$ is the injective hull of $ \overline{E}$ in $ \mathcal{U}$. Since
$\overline{E}$ is reduced, then $I$ is a reduced $ \mathcal{U}$-injective.
This shows that $E$ is reduced as an unstable $A$-module because its injective
hull (in the category $\mathrm{H}^{\ast}V-\mathcal{U}$) is
$\mathrm{H}^{*}V \otimes I$ which is reduced as unstable $A$-module.
\end{proof}
\begin{remk} {\em In the previous result the condition (1): $E$ is free as an
$\mathrm{H}^{*}V$-module is necessary. In fact, the finite $\mathrm{H}-A$-module
$\mathrm{J}_{\mathbb{Z}/2\mathbb{Z}}(1)$ is not free as an $\mathrm{H}$-module and not
reduced as an unstable $A$-module, however
$\overline{\mathrm{J}_{\mathbb{Z}/2\mathbb{Z}}(1)}= \mathbb{F}_{2}$ is
a reduced unstable $A$-module. Observe that $\mathrm{J}_{\mathbb{Z}/2\mathbb{Z}}(1)$
is isomorphic, as unstable $A$-module, to
$\mathbb{F}_{2} \oplus \sum \mathbb{F}_{2}$, the structure of
$\mathrm{H}$-module is given by: $t.\iota=\Sigma \iota$, where $\iota$ is
the generator of $\mathbb{F}_{2}$ and $t$ the generator of $\mathrm{H}$.\\
Observe that the converse of corollary 3.2.2 is false. In fact, the $\mathrm{H}-A$-module $E= \mathrm{H}^{\geq1}$ is reduced as
unstable $A$-module however the unstable $A$-module $\overline{E}\cong \sum\mathbb{F}_{2}$ is not reduced.}
\end{remk}
\section{Description of $E$ when $\overline{E}$ is nil-closed}
The main result of this paragraph concerns the relation between the two first
terms of a (minimal) injective resolution of $E$ and $ \overline{E}$.
\begin{thm} Let $E$ be an unstable $\mathrm{H}^{\ast}V-A$-module which is free as an
$\mathrm{H}^{*}V$-module. We suppose that:
\begin{enumerate}
     \item $\overline{E}$ is nil-closed.
     \item $\xymatrix{0 \ar[r] & \overline{E} \ar[r] &
  I_{0} \ar[r]^{i_{1}} & I_{1} \ar[r] & ....} $ is the beginning of a (minimal) $\mathcal{U}$-
  injective resolution of $\overline{E}$.
\end{enumerate}
Then there exists an $ \mathrm{H}^{\ast}V-A$-linear map $\varphi: \mathrm{H}^{*}V \otimes I_{0} \rightarrow
\mathrm{H}^{*}V \otimes I_{1}$ such that:
\begin{enumerate}
\item $\xymatrix{0 \ar[r] & E \ar[r] &
  \mathrm{H}^{*}V \otimes I_{0} \ar[r]^{\varphi} & \mathrm{H}^{*}V \otimes I_{1}
  \ar[r] & ....} $ is the beginning of
 a (minimal) injective resolution of $E$ (in the category
$\mathrm{H}^{*}V-\mathcal{U}$).
\item $ \overline{\varphi}= i_{1} $
\end{enumerate}
\end{thm}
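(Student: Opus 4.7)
The plan is to construct $\varphi$ as the composition of a quotient with an injective hull inclusion. The scaffolding will be the short exact sequence coming from embedding $E$ into its injective hull, together with the iterated application of Theorem 3.2.1.

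First I would apply Theorem 3.2.1 directly to $E$: since $E$ is $H^{\ast}V$-free and $\overline{E}$ is nil-closed (hence reduced), the injective hull $\mathcal{E}(E)$ in $H^{\ast}V$-$\mathcal{U}$ is isomorphic to $H^{\ast}V\otimes I_{0}$, where $I_{0}$ is the injective hull of $\overline{E}$ in $\mathcal{U}$. Let $j\colon E\hookrightarrow H^{\ast}V\otimes I_{0}$ denote this inclusion, and set $K=\mathrm{coker}(j)$; this yields the short exact sequence
\[
0\to E\to H^{\ast}V\otimes I_{0}\to K\to 0
\]
in $H^{\ast}V$-$\mathcal{U}$.

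Next I would exploit this sequence to show that $K$ is $H^{\ast}V$-free. Tensoring with $\mathbb{F}_{2}\otimes_{H^{\ast}V}-$ and using that $H^{\ast}V\otimes I_{0}$ is $H^{\ast}V$-free, the long exact Tor sequence reduces to
\[
0\to \mathrm{Tor}_{1}^{H^{\ast}V}(\mathbb{F}_{2},K)\to \overline{E}\to I_{0}\to \overline{K}\to 0.
\]
Since $\overline{E}\hookrightarrow I_{0}$ is an injection (it is the start of the injective resolution of $\overline{E}$), we obtain $\mathrm{Tor}_{1}^{H^{\ast}V}(\mathbb{F}_{2},K)=0$. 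Because $H^{\ast}V$ is connected graded (and any $H^{\ast}V$-$A$-module in sight is bounded below), this Tor-vanishing forces $K$ to be $H^{\ast}V$-free, by the standard graded Nakayama argument (i.e., lifting a basis of $\overline{K}$ to a basis of $K$).

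With $K$ now $H^{\ast}V$-free, I would invoke Theorem 3.2.1 a second time. The identification above gives $\overline{K}\cong I_{0}/\overline{E}$, and Lemma 2.3.2 together with the nil-closedness of $\overline{E}$ ensures that $I_{0}/\overline{E}$ is reduced; hence $\overline{K}$ is reduced. Theorem 3.2.1 then yields $\mathcal{E}(K)\cong H^{\ast}V\otimes\mathcal{E}(\overline{K})=H^{\ast}V\otimes I_{1}$. Define $\varphi$ as the composite
\[
H^{\ast}V\otimes I_{0}\twoheadrightarrow K\hookrightarrow \mathcal{E}(K)=H^{\ast}V\otimes I_{1}.
\]
By construction $\ker\varphi=\ker(H^{\ast}V\otimes I_{0}\twoheadrightarrow K)=E$, and the equalities $H^{\ast}V\otimes I_{0}=\mathcal{E}(E)$ and $H^{\ast}V\otimes I_{1}=\mathcal{E}(K)$ encode exactly the minimality of the first two steps of an injective resolution in $H^{\ast}V$-$\mathcal{U}$.

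Finally, to check $\overline{\varphi}=i_{1}$, apply $\mathbb{F}_{2}\otimes_{H^{\ast}V}-$ to the factorization of $\varphi$. The quotient map becomes $I_{0}\twoheadrightarrow \overline{K}=I_{0}/\overline{E}$; Lemma 3.1.2 (applicable since $K$ is $H^{\ast}V$-free with $\overline{K}$ reduced) guarantees that the hull inclusion $K\hookrightarrow H^{\ast}V\otimes I_{1}$ descends to an injection $\overline{K}\hookrightarrow I_{1}$, which must coincide with the injective hull inclusion $I_{0}/\overline{E}\hookrightarrow I_{1}$ of the minimal resolution. Composing recovers $i_{1}$. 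The main obstacle I anticipate is the verification that $K$ is $H^{\ast}V$-free from the vanishing of $\mathrm{Tor}_{1}$; everything else is a formal combination of Theorem 3.2.1, Lemma 2.3.2 and Lemma 3.1.2.
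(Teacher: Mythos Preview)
Your proof is correct and follows essentially the same route as the paper: embed $E$ in its injective hull $H^{\ast}V\otimes I_{0}$ via Theorem~3.2.1, show the cokernel (your $K$, the paper's $C_{0}$) is $H^{\ast}V$-free because $\mathrm{Tor}_{1}$ vanishes, observe $\overline{K}\cong I_{0}/\overline{E}$ is reduced by nil-closedness, and apply Theorem~3.2.1 again to identify $\mathcal{E}(K)$ with $H^{\ast}V\otimes I_{1}$. You are in fact more explicit than the paper in constructing $\varphi$ and in verifying $\overline{\varphi}=i_{1}$; the only cosmetic difference is that the paper appeals to Lemma~3.1.2 to see $\overline{j}$ is injective, whereas you (legitimately) read this off from the statement of Theorem~3.2.1.
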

\begin{proof} The unstable $A$-module $ \overline{E}$ is nil-closed so is reduced,
we have then, by theorem 3.2.1, that the injective hull of $E$ is
$\mathrm{H}^{*}V \otimes I_{0}$.
We denote by $C_{0}$ the quotient of $\mathrm{H}^{\ast}V \otimes I_{0}$ by $E$.
We have the following short exact sequence in $\mathrm{H}^{\ast}V-\mathcal{U}$:
$$
    \xymatrix{0 \ar[r] & E \ar[r]^-{ i_{0}} &
  \mathrm{H}^{*}V \otimes I_{0} \ar[r]^-{} & C_{0} \ar[r] & 0}\,.
$$
Since the induced map $\overline{i_{0}}$ is  an injection (see lemma 3.1.2), then
the unstable $A$-module $Tor_{1}^{\mathrm{H}^{\ast}V}(\mathbb{F}_{2}, C_{0})$ is trivial;
this shows that the module $C_{0}$ is free as an $\mathrm{H}^{*}V$-module (see for example
\cite{NS}, proposition A.1.5).\\
We verify that the $\mathcal{U}$-injective hull of $\overline{C_{0}}$ is $I_{1}$
and that $C_{0}$ is reduced since $\overline{C_{0}}$ is reduced (see corollary 3.2.2).
This implies, by theorem 3.2.1, that the $\mathrm{H}^{*}V-\mathcal{U}$-injective
hull of $C_{0}$ is isomorphic to $\mathrm{H}^{*}V \otimes I_{1}$.
\end{proof}
\begin{rem} {\em let $M$ be a nil-closed unstable $A$-module and \\
$\xymatrix{0 \ar[r] & M \ar[r]^-{i_{0}} & I_{0} \ar[r]^-{i_{1}}
& I_{1} \ar[r] & ....} $ be the beginning of a (minimal) $\mathcal{U}$-injective
resolution of $M$. We denote by
$$(\mathrm{H}om_{ \mathrm{H}^{\ast}V-\mathcal{U}}(\mathrm{H}^{\ast}V \otimes I_{0},\;
\mathrm{H}^{\ast}V \otimes I_{1}))_{i_{1}}$$
the set of $\mathrm{H}^{\ast}V-A$-linear map
$\varphi: \mathrm{H}^{\ast}V \otimes I_{0} \rightarrow \mathrm{H}^{\ast}V \otimes I_{1}$ such that
$\overline{\varphi}= i_{1}$.\\
Using Lannes T-functor (see \cite{L1}) we have:
$$(\mathrm{H}om_{ \mathrm{H}^{\ast}V-\mathcal{U}}(\mathrm{H}^{\ast}V \otimes I_{0},\;
\mathrm{H}^{\ast}V \otimes I_{1}))_{i_{1}} \cong
(\mathrm{H}om_{\mathcal{U}}(T_{V}I_{0},\; I_{1}))_{i_{1}}$$
where $(\mathrm{H}om_{\mathcal{U}}(T_{V}I_{0},\;I_{1}))_{i_{1}}$ is the set of
$A$-linear map $\psi: T_{V}I_{0} \rightarrow I_{1}$ such that
$\psi \circ i=i_{1}$, where $i: I_{0} \hookrightarrow T_{V}I_{0}$ denotes the
natural inclusion.\\
The kernel of any element $\psi \in (\mathrm{H}om_{\mathcal{U}}(T_{V}I_{0},\;
I_{1}))_{i_{1}}$, which is free as an $\mathrm{H}^{*}V$-module, is an unstable
$\mathrm{H}^{\ast}V-A$-module such that $\overline{ker \psi} \cong M$.}
\end{rem}
\begin{rem} {\em If $ \overline{E}$ is an $ \mathcal{U}$-injective then the only
unstable free $\mathrm{H}^{\ast}V-A$-module, up to isomorphism, solution of
the problem $(\mathcal{P})$ is $\mathrm{H}^{\ast}V \otimes \overline{E}$.\\
Let $n$ be an even integer. The unstable free
$\mathrm{H}-A$-modules, up to isomorphism, solution of the problem
$(\mathcal{P})$ when $M$ is $\mathrm{H}^{*}BSO(n)$ are $\mathrm{H}^{*}BO(n)$ and
$\mathrm{H} \otimes \mathrm{H}^{*}BSO(n)$. We verify that these two
$\mathrm{H}-A$-modules are not isomorphic in the category
$\mathrm{H}- \mathcal{U}$ (since it does not exist an $A$-linear section of the
projection $\mathrm{H}^{*}BO(n) \rightarrow \mathrm{H}^{*}BSO(n)$).}
\end{rem}
\section{Applications}
\subsection{} Our first application
concerns the determination of the $\bmod{.\hspace{2pt}2}$
cohomology of the mapping space $\mathbf{hom}
\hspace{1pt}(\mathrm{B}\hspace{1pt} (\mathbb{Z}/2^{n}),Y)$ whose
domain is a classi\-fying space for the group $\mathbb{Z}/2^{n}$
and whose range is a space $Y$ such that $\mathrm{H}^{*}Y$ is
concentrated in even degrees.

\medskip
We will just recall some facts, ignoring the p-completion
problems. For further details see \cite{DL}.

\medskip
One proceeds by induction on the integer $n$. Let us set
$$
\hspace{24pt} X=\mathbf{hom} \hspace{1pt}(\mathrm{E}\hspace{1pt}
(\mathbb{Z}/2^{n})/ (\mathbb{Z}/2^{n-1}),Y) \hspace{24pt}.
$$
The space $X$ has the homotopy type of $\mathbf{hom}
\hspace{1pt}(\mathrm{B}\hspace{1pt} (\mathbb{Z}/2^{n-1}),Y)$ and
is equipped of an action $\mathbb{Z}/2$ such that one has a
homotopy equivalence
$$
\hspace{24pt} \mathbf{hom} \hspace{1pt}(\mathrm{B}\hspace{1pt}
(\mathbb{Z}/2^{n}),Y)\cong X^{\mathrm{h}\hspace{1pt} \mathbb{Z}/2}
\hspace{24pt},
$$
$X^{\mathrm{h}\hspace{1pt} \mathbb{Z}/2}$ denoting the homotopy
fixed point space: $\mathbf{hom}_{\mathbb{Z}/2}
\hspace{1pt}(\mathrm{E}\hspace{1pt} \mathbb{Z}/2,X)$. Using
$\mathrm{Fix}_{\mathbb{Z}/2}$-theory \cite{L1}, one gets:
$$
\hspace{24pt} \mathrm{H}^{*} \mathbf{hom}
\hspace{1pt}(\mathrm{B}\hspace{1pt} (\mathbb{Z}/2^{n}),Y) \cong
\mathrm{Fix}_{\mathbb{Z}/2} \hspace{2pt}
\mathrm{H}^{*}_{\mathbb{Z}/2} \hspace{1pt}X \hspace{24pt}.
$$
Since the computation of the functor $\mathrm{Fix}_{\mathbb{Z}/2}$
on an unstable $\mathrm{H}-\mathrm{A}$-module is not difficult in
general, the determination of the $\bmod{.\hspace{2pt}2}$
cohomology of the mapping space $\mathbf{hom}
\hspace{1pt}(\mathrm{B}\hspace{1pt} (\mathbb{Z}/2^{n}),Y)$ is
reduced to the determination of the unstable
$\mathrm{H}-\mathrm{A}$-module $\mathrm{H}^{*}_{\mathbb{Z}/2}
\hspace{1pt}X$. As we are going to explain, this last point is
closely related to problem $(\mathcal{P})$.

\medskip
One knows by induction on $n$ that the $\bmod{.\hspace{2pt}2}$
cohomology of the space $X$ as the one of the space $Y$ is
concentrated in even degrees and one checks that the action of
$\mathbb{Z}/2$ on $\mathrm{H}^{*}(Y;\mathbb{Z})$ is trivial. These
two facts imply that the Serre spectral sequence, for
$\bmod{.\hspace{2pt}2}$ cohomology, associated to the fibration
$$
X\rightarrow X_{\mathrm{h}\mathbb{Z}/2} \rightarrow\mathrm{B}
\mathbb{Z}/2
$$
collapses ($X_{\mathrm{h} \mathbb{Z}/2}$ denotes the Borel
construction $\mathrm{E}\mathbb{Z}/2 \times_{\mathbb{Z}/2}X$).
This collapsing implies in turn that
$\mathrm{H}^{*}_{\mathbb{Z}/2}X$ is $\mathrm{H}$-free and that
$\overline{ \mathrm{H}^{*}_{\mathbb{Z}/2}X}$ is isomorphic to
$\mathrm{H}^{*}X$. So the determination of $\mathrm{H}^{*}
\mathbf{hom} \hspace{1pt}(\mathrm{B}\hspace{1pt}
(\mathbb{Z}/2^{n}),Y)$ is indeed reduced to the resolution of a
problem $(\mathcal{P})$.

\medskip
We conclude this subsection by a concrete example (we follow
\cite{De}, section 6); we take $n=2$ and $Y=\mathrm{BSU}(2)$.
Using $\mathrm{T} _{\mathbb{Z}/2}$-computations one sees that $X$
has the homotopy type of $\mathrm{BSU}(2)\coprod \mathrm{BSU}(2)$;
one checks also that the $\mathbb{Z}/2$-action preserves the
connected components. The $(\mathcal{P})$-problem asociated to the
determination of the unstable $\mathrm{H}-\mathrm{A}$-module
$\mathrm{H}^{*}_{\mathbb{Z}/2} \hspace{1pt}X$ is the following
one:

\smallskip
Find the unstable $\mathrm{H}-\mathrm{A}$-modules $E$ such that
\begin{itemize}
\item [--] $E$ is
$\mathrm{H}$-free;
\item [--] the unstable
$\mathrm{A}$-module $\overline{E}$ is isomorphic to
$\mathrm{H}^{*}\mathrm{BSU}(2)$.
\end{itemize}
Using the fact that the injective hull, in the category
$\mathrm{H}-\mathcal{U}$, of $E$ is $\mathrm{H}\otimes\mathrm{H}$
(see theorem 3.2), one checks that one has two possibilities:
\begin{itemize}
\item [--]
$E\cong\mathrm{H}\otimes \mathrm{H}^{*}\mathrm{BSU}(2)$;
\item [--]
$E\cong\mathrm{H}\otimes _{\mathrm{H}^{*}\mathrm{BU}(1)}
\mathrm{H}^{*}\mathrm{BU}(2)$ (the structures of unstable
$\mathrm{H}^{*}\mathrm{BU}(1)- \mathrm{A}$-modules on
$\mathrm{H}=\mathrm{H}^{*} \mathrm{BO}(1)$ and
$\mathrm{H}^{*}\mathrm{BU}(2)$ are respectively induced by the
inclusion of $\mathrm{O}(1)$ in $\mathrm{U}(1)$ and the
determinant homomorphism from $\mathrm{U}(2)$ to $\mathrm{U}(1)$).
\end{itemize}
\medskip

\subsection{} The theorem 4.1 can be illustrated, topologically, as follows:
\begin{propo} Let $X$ be a CW-complex on which acts an elementary abelian
group 2-group $V$. Suppose that:
\begin{enumerate}
    \item $\mathrm{H}^{\ast}X$ is nil-closed
    \item $\xymatrix{0 \ar[r] &
    \mathrm{H}^{\ast}X \ar[r] &
  I_{0} \ar[r]^-{\alpha} & I_{1} \ar[r] & ....} $ is the beginning of a (minimal)
  $\mathcal{U}$-injective resolution of $\mathrm{H}^{\ast}X$
    \item $\mathrm{H}^{\ast}_{V}X$ is free as an $\mathrm{H}^{*}V$-module.
\end{enumerate}
Then there exists an $\mathrm{H}^{\ast}V-A$-linear map $\varphi: \;
\mathrm{H}^{*}V \otimes I_{0}
\rightarrow \mathrm{H}^{*}V \otimes I_{1}$ such that:
\begin{enumerate}
    \item $\mathrm{H}^{\ast}_{V}X \cong Ker(\varphi)$.
    \item $\xymatrix{0 \ar[r] & \mathrm{H}^{\ast}_{V}X \ar[r] &
  \mathrm{H}^{*}V \otimes I_{0} \ar[r]^-{\varphi} & \mathrm{H}^{*}V \otimes I_{1} \ar[r] & ....} $
  is the beginning of a (minimal) injective resolution of $\mathrm{H}^{\ast}_{V}X$ (in the category
  $\mathrm{H}^{*}V-\mathcal{U}$).
    \item $\overline{\varphi}= \alpha : I_{0} \rightarrow I_{1}$.
\end{enumerate}
\end{propo}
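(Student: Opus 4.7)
The plan is to recognize this proposition as the topological incarnation of Theorem 4.1 applied to the unstable $\mathrm{H}^{\ast}V-A$-module $E = \mathrm{H}^{\ast}_V X$. To invoke Theorem 4.1 I must check its two hypotheses: that $E$ is free as an $\mathrm{H}^{\ast}V$-module, and that $\overline{E}$ is nil-closed.

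The first requirement is exactly hypothesis (3). For the second, I would use the Serre spectral sequence of the Borel fibration $X \to X_{\mathrm{h}V} \to \mathrm{B}V$ with $\bmod 2$ coefficients. Since $\mathrm{H}^{\ast}_V X$ is assumed $\mathrm{H}^{\ast}V$-free, the spectral sequence must collapse at $E_2$, and the canonical edge map identifies $\overline{\mathrm{H}^{\ast}_V X}$ with $\mathrm{H}^{\ast}X$ as unstable $A$-modules (this is the standard argument already recalled in the introduction and in Section 5.1). Combined with hypothesis (1), this yields that $\overline{E}$ is nil-closed.

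With both hypotheses verified, Theorem 4.1 applied to $E = \mathrm{H}^{\ast}_V X$ together with the given $\mathcal{U}$-injective resolution of $\overline{E} \cong \mathrm{H}^{\ast}X$ directly produces an $\mathrm{H}^{\ast}V-A$-linear map $\varphi \colon \mathrm{H}^{\ast}V \otimes I_0 \to \mathrm{H}^{\ast}V \otimes I_1$ with $\overline{\varphi} = \alpha$, such that $\mathrm{H}^{\ast}_V X \cong \ker \varphi$ and the resulting complex $0 \to \mathrm{H}^{\ast}_V X \to \mathrm{H}^{\ast}V \otimes I_0 \to \mathrm{H}^{\ast}V \otimes I_1 \to \cdots$ is the beginning of a minimal injective resolution in $\mathrm{H}^{\ast}V-\mathcal{U}$. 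This simultaneously yields all three conclusions of the proposition.

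The proposition therefore contains no genuine new algebraic obstacle beyond these two translations. The only point requiring care is ensuring that the $\mathrm{H}^{\ast}V$-freeness of $\mathrm{H}^{\ast}_V X$ really forces the Serre spectral sequence to collapse and to identify $\overline{\mathrm{H}^{\ast}_V X}$ with $\mathrm{H}^{\ast}X$ on the nose (not merely up to a filtration with the correct associated graded); once that is in hand, the statement reduces mechanically to Theorem 4.1.
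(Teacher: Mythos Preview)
Your proposal is correct and matches the paper's approach exactly. The paper offers no separate proof for this proposition: it is stated under the heading ``The theorem 4.1 can be illustrated, topologically, as follows'' and is meant to be read as Theorem 4.1 applied verbatim to $E=\mathrm{H}^{\ast}_{V}X$, which is precisely your reduction; the identification $\overline{\mathrm{H}^{\ast}_{V}X}\cong \mathrm{H}^{\ast}X$ under the $\mathrm{H}^{\ast}V$-freeness hypothesis is taken for granted in the paper just as you take it, so your flagged ``only point requiring care'' is not something the paper handles more carefully than you do.
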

In particular, we have:
\begin{coro} Let $X$ be a CW-complex on which acts an elementary abelian
group 2-group $V$. Suppose that:
\begin{enumerate}
    \item $\mathrm{H}^{\ast}X$ is a reduced $\mathcal{U}$-injective,
    \item $\mathrm{H}^{\ast}_{V}X$ is free as an $\mathrm{H}^{*}V$-module.
\end{enumerate}
Then $\mathrm{H}^{\ast}_{V}X \cong \mathrm{H}^{\ast}V \otimes H^{*}X$.
\end{coro}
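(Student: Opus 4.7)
The plan is to deduce this directly from Proposition 5.2.1, by observing that when $\mathrm{H}^{\ast}X$ is itself $\mathcal{U}$-injective, the beginning of its minimal $\mathcal{U}$-injective resolution has vanishing first cosyzygy $I_{1}$, which forces the connecting map $\varphi$ of Proposition 5.2.1 to be zero.

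First I would verify that the hypotheses of Proposition 5.2.1 are met. Hypothesis (3), namely that $\mathrm{H}^{\ast}_{V}X$ is $\mathrm{H}^{\ast}V$-free, is explicitly assumed. For hypothesis (1), one checks that a \emph{reduced} $\mathcal{U}$-injective is nil-closed: by Lemma 2.3.2, nil-closedness of $M$ is equivalent to both $M$ and $\mathcal{E}(M)/M$ being reduced; but a $\mathcal{U}$-injective $M$ is its own injective hull, so $\mathcal{E}(M)/M = 0$ is trivially reduced, and combined with the given reducedness of $\mathrm{H}^{\ast}X$ this yields nil-closedness. For hypothesis (2), the minimal $\mathcal{U}$-injective resolution of the injective $\mathrm{H}^{\ast}X$ is simply $0 \to \mathrm{H}^{\ast}X \xrightarrow{\mathrm{id}} \mathrm{H}^{\ast}X \to 0 \to \cdots$, so one may take $I_{0} = \mathrm{H}^{\ast}X$, $I_{1} = 0$, and $\alpha = 0$.

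Applying Proposition 5.2.1 with these choices then produces an $\mathrm{H}^{\ast}V\text{-}A$-linear map $\varphi : \mathrm{H}^{\ast}V \otimes \mathrm{H}^{\ast}X \to \mathrm{H}^{\ast}V \otimes 0 = 0$, which is necessarily the zero map. The conclusion $\mathrm{H}^{\ast}_{V}X \cong \ker \varphi = \mathrm{H}^{\ast}V \otimes \mathrm{H}^{\ast}X$ then follows at once.

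The argument is really just bookkeeping once Proposition 5.2.1 is available; the only substantive point is the implication \emph{reduced $\mathcal{U}$-injective $\Rightarrow$ nil-closed}, which is settled by one application of Lemma 2.3.2. Equivalently, one could appeal to Remark 4.3, which already records the purely algebraic statement that the only $\mathrm{H}^{\ast}V$-free solution of problem $(\mathcal{P})$ when $\overline{E}$ is a $\mathcal{U}$-injective is $\mathrm{H}^{\ast}V \otimes \overline{E}$; the corollary is the topological translation of this, using the identification $\overline{\mathrm{H}^{\ast}_{V}X} \cong \mathrm{H}^{\ast}X$ supplied by the collapsing Serre spectral sequence that underlies the setting of Proposition 5.2.1.
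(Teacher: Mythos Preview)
Your proof is correct and follows exactly the route the paper intends: the corollary is stated immediately after Proposition~5.2.1 with the phrase ``In particular, we have,'' and no separate proof is given, so deducing it by specializing that proposition to the case $I_{0}=\mathrm{H}^{\ast}X$, $I_{1}=0$ (after noting via Lemma~2.3.2 that a reduced $\mathcal{U}$-injective is nil-closed) is precisely what is expected. Your closing remark that this is the topological incarnation of the first sentence of Remark~4.3 is also on point.
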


\section{Description of $E$ when $\overline{E}$ is isomorphic to $\sum^{n}\mathbb{F}_{2}$}
In this section, we prove the following result.
\begin{thm} Let $E$ be unstable $\mathrm{H}^{\ast}V-A$-module which is free as an
$\mathrm{H}^{\ast}V$-module.
If $\overline{E}$ is isomorphic to $\sum^{n}\mathbb{F}_{2}$, then there exists
an element $u$ in $\mathrm{H}^{\ast}V$ such that:
\begin{enumerate}
\item $u = \displaystyle\prod_{i} \theta_{i}^{\alpha_{i}}$, where $\theta_{i} \in
(\mathrm{H}^{1}V)\setminus \{0\}$ and $\alpha_{i} \in \mathbb{N}$
\item $E \cong \sum^{d}u \mathrm{H}^{\ast}V$ with $d+\displaystyle\sum_{i} \alpha_{i}=n$.
\end{enumerate}
\end{thm}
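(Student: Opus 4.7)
The plan is to identify $E$ as a suspension of an $A$-stable principal ideal in $\mathrm{H}^{\ast}V$ via the adjunction unit $\eta_{V}$, and then to invoke the theorem of J.-P.\ Serre alluded to in the introduction in order to pin down the shape of the generator.

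First, since $E$ is free over $\mathrm{H}^{\ast}V$ and $\overline{E}\cong \Sigma^{n}\mathbb{F}_{2}$ is one-dimensional over $\mathbb{F}_{2}$, the module $E$ is $\mathrm{H}^{\ast}V$-free of rank one; in particular it is of finite type. Next I would compute $\mathrm{F}ix_{V}E$: by the remark following Proposition~2.6.5 one has $\dim \mathrm{F}ix_{V}E=\dim \overline{E}=1$, and a one-dimensional unstable $A$-module is necessarily of the form $\Sigma^{d}\mathbb{F}_{2}$ for a unique integer $d\geq 0$. Consequently there is an isomorphism $\mathrm{H}^{\ast}V\otimes \mathrm{F}ix_{V}E\cong \Sigma^{d}\mathrm{H}^{\ast}V$ in $\mathrm{H}^{\ast}V\text{-}\mathcal{U}$.

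Second, because $E$ is torsion-free, Proposition~2.6.3 (or point (1) after Proposition~2.6.5) supplies an $\mathrm{H}^{\ast}V$-$A$-linear injection $\eta_{V}\colon E\hookrightarrow \Sigma^{d}\mathrm{H}^{\ast}V$. Its image is a cyclic sub-$\mathrm{H}^{\ast}V$-module of $\Sigma^{d}\mathrm{H}^{\ast}V$; since $\mathrm{H}^{\ast}V$ is an integral domain this image has the form $\Sigma^{d}u\,\mathrm{H}^{\ast}V$ for a unique nonzero $u\in \mathrm{H}^{\ast}V$, necessarily of degree $n-d$ (so that the generator lies in total degree $n$). Hence $E\cong \Sigma^{d}u\,\mathrm{H}^{\ast}V$, and the fact that the right-hand side is an $A$-submodule of $\Sigma^{d}\mathrm{H}^{\ast}V$ forces the principal ideal $u\,\mathrm{H}^{\ast}V\subseteq \mathrm{H}^{\ast}V$ to be stable under the Steenrod algebra.

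To finish I would apply Serre's theorem: a nonzero $u\in \mathrm{H}^{\ast}V$ generating an $A$-stable principal ideal factors as $u=\prod_{i}\theta_{i}^{\alpha_{i}}$ with $\theta_{i}\in \mathrm{H}^{1}V\setminus\{0\}$ and $\alpha_{i}\in\mathbb{N}$. The degree identity $|u|=n-d$ then reads $d+\sum_{i}\alpha_{i}=n$, which is precisely the assertion of the theorem. The main obstacle is the invocation of Serre's theorem in this last paragraph; the earlier steps are essentially bookkeeping around the $\mathrm{F}ix_{V}$-adjunction together with the observation that a rank-one $\mathrm{H}^{\ast}V$-$A$-submodule of $\Sigma^{d}\mathrm{H}^{\ast}V$ is automatically a shifted principal ideal.
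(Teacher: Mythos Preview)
Your proposal is correct and follows essentially the same route as the paper: both use the dimension identity $\dim\overline{E}=\dim\mathrm{Fix}_{V}E$ to get $\mathrm{Fix}_{V}E\cong\Sigma^{d}\mathbb{F}_{2}$, embed $E$ into $\Sigma^{d}\mathrm{H}^{\ast}V$ via $\eta_{V}$, recognize the image as a shifted principal ideal $\Sigma^{d}u\,\mathrm{H}^{\ast}V$, and then appeal to Serre. The only cosmetic difference is that the paper states Serre's result as the inclusion $c_{V}^{N}\mathrm{H}^{\ast}V\subset E'$ for some $N$ and deduces from $u\mid c_{V}^{N}$ in the UFD $\mathrm{H}^{\ast}V$ that $u$ is a product of degree-one classes, whereas you invoke that consequence directly.
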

\begin{proof} Let $N$ be an unstable $A$-module, we denote by $\mathrm{d}imN$ the total dimension of $N$ that is
$\mathrm{d}im \; N= \sum_{i} \mathrm{d}im \; N^{i}$. We have the equality
$\mathrm{d}im \; \overline{E}=1 = \mathrm{d}im \; \mathrm{F}ix_{_{V}}E$ (see \cite{LZ3}), so we deduce that $\mathrm{F}ix_{_{V}}E=
\sum^{l}\mathbb{F}_{2}$, where
$l \in \mathbb{N}$. Let $\eta_{_{V}}:\; E \rightarrow \mathrm{H}^{\ast}V \otimes \mathrm{F}ix_{_{V}}E$
be the adjoint of the identity of $\mathrm{F}ix_{_{V}}E$ (see \cite{LZ2}). Since the map $\eta_{V}$ is an injection, then the module $E$ is a
sub-$\mathrm{H}^{\ast}V-A$-module of $\sum^{l}\mathrm{H}^{\ast}V$. Let's write $E=\sum^{l}E'$, where $E'$ is
sub-$\mathrm{H}^{\ast}V-A$-module of $\mathrm{H}^{\ast}V$ . By a result of J-P. Serre (see \cite{Se}),
there exists $N$ such that: $\mathrm{c}_{V}^{N} \mathrm{H}^{\ast}V \subset E' \subset \mathrm{H}^{\ast}V$. Since $E'$ is
free as an $\mathrm{H}^{\ast}V$-module and of dimension one, then there exists $u \in \mathrm{\widetilde{H}}^{\ast}V$
such that $E'= u \mathrm{H}^{\ast}V$. The inclusion $\mathrm{c}_{V}^{N} \mathrm{H}^{\ast}V \subset u \mathrm{H}^{\ast}V$
proves that $u= \displaystyle\prod_{i} \theta_{i}^{\alpha_{i}}$, where $\theta_{i} \in
(\mathrm{H}^{1}V)\setminus \{0\}$ and $\alpha_{i} \in \mathbb{N}$.
\end{proof}

\begin{rem} {\em We remark that by the previous result, we can determinate $E$ when $\overline{E}$ is isomorphic to
$\mathbb{F}_{2}\oplus
\sum^{n}\mathbb{F}_{2}$. In this case, we verify that $ E \cong \mathrm{H}^{\ast}V \oplus
\sum^{d}u \mathrm{H}^{\ast}V$, where $u = \displaystyle\prod_{i} \theta_{i}^{\alpha_{i}}$, $\theta_{i} \in
\mathrm{H}^{\ast}V\setminus \{0\}$, $\alpha_{i} \in \mathbb{N}$ and $d+\displaystyle\sum_{i} \alpha_{i}=n$.
In fact, since the $\mathrm{H}^{\ast}V-\mathcal{U}$-injective module $\mathrm{H}^{\ast}V$ is a sub-$\mathrm{H}^{\ast}V$-module of $E$,
then $E \cong \mathrm{H}^{\ast}V \oplus E'$, where $E'$ is
an unstable $\mathrm{H}^{\ast}V-A$-module, free as an
$\mathrm{H}^{\ast}V$-module and such that $ \overline{E'} \cong \sum^{n}\mathbb{F}_{2}$. The result holds from
theorem 6.1.}
\end{rem}
\textbf{6.3 \hspace{5mm} Example}\\
We give an example showing how to realize topologically the cases of theorem 6.1 and remark 6.2.\\
Let $\rho: V \rightarrow \mathrm{O}(d)$ be a group homomorphism. $\rho$ gives both an action of $V$ on $\mathrm{D}^{d}$, $\mathrm{S}^{d-1}$
and a $d$-dimensional orthogonal bundle whose mod.2 Euler class is denoted by $e(\rho)$.\\
The long exact sequence of the pair ($\mathrm{D}^{d},\; \mathrm{S}^{d-1}$) and the Thom isomorphism give the long (Gysin) exact sequence
(see for example \cite{Hu}):
$$
    \xymatrix{ \cdots  \ar[r] & \mathrm{H}^{*-1}V
    \ar[r]^-{}& \mathrm{H}_{V}^{*-1}\mathrm{S}^{d-1} \ar[r]^-{ } & \Sigma^{-d}\mathrm{H}^{*}V  \ar[r]^-{\smile e(\rho) } &  \mathrm{H}^{*}V \ar[r]^-{} &
    \mathrm{H}^{*}_{V}\mathrm{S}^{d-1}  \ar[r]^-{} & \cdots }
$$
The decomposition $ \rho \cong \displaystyle\oplus_{i=1}^{d} \; \rho_{i}$ of the representation $\rho$ into orthogonal representations of dimension 1 gives
$e(\rho)= \prod_{i}e(\rho _{i}).$ We have now two cases.\\
- If none of the representations $\rho _{i}$ is trivial then $e(\rho)$ is non zero and $ \mathrm{H}^{\ast}_{V}(\mathrm{D}^{d}, \mathrm{S}^{d-1})$ is
isomorphic to $ e(\rho) \mathrm{H}^{*}V$ as an $\mathrm{H}^{*}V-\mathrm{A}$-module. This illustrates theorem 6.1.\\
- Otherwise, let's write $\rho= \sigma \oplus \tau$, $\sigma$ (resp. $\tau$) being the direct sum of the non trivial (resp. trivial) representations
$\rho _{i}$. Then $\mathrm{H}^{*}_{V} \mathrm{S}^{d-1} \cong \mathrm{H}^{*}V \oplus \Sigma^{\mathrm{d}im \tau}\; e(\sigma)\mathrm{H}^{*}V$ and
$\mathrm{H}^{\ast}_{V}(\mathrm{S}^{d-1})$ is an illustration of the remark 6.2.

\section{Determination of $E$ when $V$ is $ \mathbb{Z}/2 \mathbb{Z}$ and $\overline{E}$ is $\mathrm{J}(2) $}
In this section, we assume that $V$ is $ \mathbb{Z}/2 \mathbb{Z}$ and $\overline{E}$ is the Brown-Gitler module $\mathrm{J}(2)$.\\
We denote by $\mathrm{H}= \mathbb{F}_{2}[t]$ the cohomology of $\mathbb{Z}/2\mathbb{Z}$, where $t$ is an element of $\mathrm{H}$
of degree one. We have the following result.\\

\begin{prop} Let $E$ be an $ \mathrm{H}-A$-module which is $ \mathrm{H}$-free and such that $\overline{E}$
is isomorphic to $ \mathrm{J}(2)$ then:\\
$ E \cong \mathrm{H} \otimes  \mathrm{J}(2)$\\
or \\
$E$ is the sub-$\mathrm{H}-A$-module of $\mathrm{H} \oplus \sum \mathrm{H}$ generated by $(t, \Sigma1)$ and $(t^{2},0)$.
\end{prop}
\begin{proof} This proof uses the Smith theory (see \cite{DW}, \cite{LZ2} theorem 2.1) which gives us an exact sequence (*) in $ \mathrm{H}-\mathcal{U}$:
$$
  (*) \;\;\;\;  \xymatrix{0 \ar[r] & E
    \ar[r]^-{\eta}& \mathrm{H} \otimes \mathrm{F}ixE \ar[r]^-{ } & C \ar[r] & 0}
$$
where $C$ the quotient of $\mathrm{H} \otimes \mathrm{F}ixE$ is finite and also $\mathrm{F}ixE$ is finite.\\
If the module $C$ is trivial then $E$ is isomorphic to $\mathrm{H} \otimes \mathrm{J}(2)$.\\
When $C$ is a non trivial module. By applying the functor $ \mathbb{F}_{2} \otimes_{\mathrm{H}}-$ to the exact sequence (*), we obtain:
$$
    \xymatrix{0 \ar[r] &\sum \tau C
    \ar[r]^-{}& \overline{E}= \mathrm{J}(2) \ar[r]^-{ } & \mathrm{F}ixE \ar[r]^-{ } & \overline{C}  \ar[r] & 0}
$$
where $\tau C$ is the trivial part of $C$ (see \cite{BHZ}).\\
Let's denote by $Q$ the quotient of $ \overline{E}$ by $\sum \tau C$. By properties of the module $\mathrm{J}(2)$, we have that
$\sum \tau C = \sum^{2} \mathbb{F}_{2}$ and $Q = \sum \mathbb{F}_{2}$. The exact sequence:
$$
    \xymatrix{0 \ar[r] &\sum \mathbb{F}_{2}
    \ar[r]^-{}&  FixE \ar[r]^-{ } & \overline{C}  \ar[r] & 0}
$$
gives that $FixE \cong \sum \mathbb{F}_{2} \oplus \overline{C}$. One checks that
the module $ \overline{C}$ is either isomorphic to $ \mathbb{F}_{2}$ or $\sum \mathbb{F}_{2}$. If
$ \overline{C}=\sum \mathbb{F}_{2}$ then
$\mathrm{F}ixE \cong \sum \mathbb{F}_{2} \oplus \sum \mathbb{F}_{2}$ as an unstable $A$-module, which implies that the module $E$ is a suspension
which is impossible because $ \overline{E}= \mathrm{J}(2)$ is not a suspension. We conclude that $\overline{C}= \mathbb{F}_{2}$.
Since $\tau C = \sum \mathbb{F}_{2}$ then  we get $C$ is isomorphic to $ \mathrm{H}^{\leq 1}$, where $\mathrm{H}^{\leq 1}$ denotes
the sub-$\mathrm{H}-A$-module of $\mathrm{H}$
consisting of elements of degree less or equal than 1. We have the following exact sequence in
$ \mathrm{H}-\mathcal{U}$:
$$
    \xymatrix{0 \ar[r] & E
    \ar[r]^-{}& \mathrm{H} \oplus \sum \mathrm{H}  \ar[r]^-{\varphi } & \mathrm{H}^{\leq 1}\ar[r] & 0}.
$$
The module $E$, we are searching for, is the kernel of $\varphi$ and we check that it is the sub-$ \mathrm{H}- \mathrm{A}$-module
of $\mathrm{H} \oplus \sum \mathrm{H}$ generated by the elements $(t, \Sigma1)$ and $(t^{2},0)$.
\end{proof}

\begin{rem} {\em Let be $\mathbb{Z}/2\mathbb{Z}$ act on a real projective
space $\mathbb{R}\mathrm{P}^{2}$; let $x_{0}$ be a fixed point of this action (the set of fixed point is not empty for example by an argument
of Lefschetz number). We have:\\
- The Serre spectral sequence collapses to give that: $ \mathrm{H}^{\ast}_{V} (\mathbb{R}\mathrm{P}^{2}, x_{0})$ is $\mathrm{H}$-free and
$\overline{ \mathrm{H}^{\ast}_{V} (\mathbb{R}\mathrm{P}^{2}, x_{0})}$ is isomorphic to $\mathrm{J}(2)$.\\
- In \cite{DW}, Dwyer and Wilkerson have shown that $ \mathrm{H}^{\ast}_{V} \mathbb{R}\mathrm{P}^{2}= \mathbb{F}_{2}[t,y]/(f)$
where $y$ restricts to $x$ and $f=y^{i}(y+t)^{j}$ for
$i+j=3$. It is easy to check that this computation agrees with theorem 7.1.}
\end{rem}

\vspace*{5mm}

\end{document}